\documentclass[12pt]{amsart}

\usepackage{verbatim}
\usepackage{tikz}
\usetikzlibrary{cd,matrix,arrows,positioning,calc} 
\tikzset{
    >=stealth',
    punkt/.style={
           rectangle,
           rounded corners,
           draw=black, very thick,
           text width=6.5em,
           minimum height=2em,
           text centered},
    pil/.style={
           ->,
           thick,
           shorten <=2pt,
           shorten >=2pt,}
}
\usepackage[ top=.2in,
  bottom=1in,
  left=0.8in,
  right=0.8in,, headheight=15pt, includehead]
{geometry}                
\geometry{a4paper}                   


\usepackage{fancyhdr}
\pagestyle{fancy}
\lhead{Topology of $\mathcal{H}^{\operatorname{even}}(6)$}
\rhead{Riccardo Giannini}

\usepackage{wrapfig}
\usepackage[font=footnotesize, labelfont=bf]{caption}
\setlength\intextsep{4pt}

\usepackage{graphicx}
\usepackage[english]{babel}
\usepackage[applemac]{inputenc}
\usepackage{epstopdf}
\DeclareGraphicsRule{.tif}{png}{.png}{`convert #1 `dirname #1`/`basename #1 .tif`.png}

\usepackage{amssymb, amsthm, amsfonts, amsmath, mathtools}
\usepackage[all]{xy}
\usepackage{hyperref}


\newtheorem{theorem}{Theorem}
\newtheorem{proposition}[theorem]{Proposition}

\newtheorem{corollary}[theorem]{Corollary}
\newtheorem{lemma}[theorem]{Lemma}
\newtheorem*{theorem*}{Theorem}
\newtheorem*{proposition*}{Proposition}
\newtheorem*{question*}{Question}
\newtheorem*{conjecture*}{Conjecture}

\theoremstyle{definition}

\title{A\MakeLowercase{bout the even minimal stratum of translation surfaces in genus $4$}}

\author{R\MakeLowercase{iccardo} G\MakeLowercase{iannini}}

\begin{document}
\maketitle

\begin{abstract}
\noindent 
   \footnotesize In the present note, we complete the correspondence between stratum components of translation surfaces in low genus and finite-type Artin groups with defining Dynkin diagram containing $E_6$. In an earlier work, we showed that in genus $3$ the monodromy of the non-hyperelliptic connected components $\mathcal{H}^{\operatorname{odd}}(4)$ and $\mathcal{H}(3,1)$ are highly non-injective, as the respective kernels contain a non-abelian free group of rank $2$. The result holds since both the stratum components are orbifold classifying spaces for central extensions of the inner automorphism groups of the finite-type Artin groups $A_{E_6}$ and $A_{E_7}$, respectively. The following is a note extending the same result to the stratum $\mathcal{H}^{\operatorname{even}}(6)$ in genus $4$, which is an orbifold classifying space for a central extension of the group $\operatorname{Inn}(A_{E_8})$.
\end{abstract}

\section*{Introduction}

We study the topology and the topological monodromy of a non-hyperelliptic stratum component of translation surfaces in genus $4$, namely the minimal stratum denoted by $\mathcal{H}^{\operatorname{even}}(6)$. Strata of translation surfaces are moduli spaces parametrizing families of \textit{translation surfaces}, obtained by identifying the sides of polygons in the complex plane through translations. After the identification through translations, the vertices of the defining polygons come with an angle of $2(k_i+1)\pi$ for some positive integers $k_i$. The data these integers give stratify the whole moduli space of translation surfaces in spaces denoted by $\mathcal{H}(k_1,\dots,k_n)$. The strata are not necessarily connected.\\

Kontsevich-Zorich described the connected components of the strata \cite{Kontsevich2003} and proved that some stratum components are hyperelliptic, parameterizing translation surfaces that are branched double covers of spheres. The hyperelliptic stratum components are orbifold classifying spaces for some finite extension of braid groups \cite[Section 2]{Looijenga2014}. Looijenga-Mondello proved that the non-hyperellipitic stratum components $\mathcal{H}^{\operatorname{odd}}(4)$ and $\mathcal{H}(3,1)$ are orbifold classifying spaces for a central extension of the inner automorphism group of some \textit{Artin groups} $A_\Gamma$ \cite[Theorem 1.1]{Looijenga2014}, finitely presented groups with a presentation given by a finite tree $\Gamma$. We extend the result to  the stratum component $\mathcal{H}^{\operatorname{even}}(6)$.

\begin{theorem}
The stratum components $\mathcal{H}^{\operatorname{even}}(6)$ is an orbifold classifying space for a central extension of $\operatorname{Inn}(A_{E_8})$, the inner automorphism group of the Artin group of type $E_8$.
\end{theorem}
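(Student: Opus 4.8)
The plan is to follow the strategy Looijenga--Mondello used for $\mathcal{H}^{\operatorname{odd}}(4)$ and $\mathcal{H}(3,1)$: realise $\mathcal{H}^{\operatorname{even}}(6)$, together with one distinguished boundary point, as the complement of the discriminant in the base of a versal deformation of the plane curve singularity of type $E_8$, and then invoke Brieskorn's computation of the fundamental group of such a complement and Deligne's theorem on its asphericity. The group $A_{E_8}$ enters because the governing degeneration is the trigonal $(3,5)$-cusp $z^3=w^5$ --- in contrast to the hyperelliptic $(2,9)$-cusp $z^2=w^9$ of type $A_8$ that governs $\mathcal{H}^{\operatorname{hyp}}(6)$ --- and the passage from $A_{E_8}$ to a central extension of $\operatorname{Inn}(A_{E_8})$ comes from the $\mathbb{C}^*$-action rescaling the abelian differential.

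I would begin by making the stratum explicit in classical terms. A point of $\mathcal{H}^{\operatorname{even}}(6)$ is a pair $(C,\omega)$ with $C$ non-hyperelliptic of genus $4$ and $\omega$ a holomorphic differential with a single zero, necessarily of order $6$, at a point $P$; then $\mathcal{O}_C(3P)$ is an effective theta characteristic, so the evenness imposed by $\mathcal{H}^{\operatorname{even}}(6)$ forces $h^0(\mathcal{O}_C(3P))=2$. Hence $|3P|$ is the trigonal pencil $g^1_3$ and $\mathcal{O}_C(3P)$ is the vanishing theta-null: in the canonical model $C=Q\cap S\subset\mathbb{P}^3$ the quadric $Q$ is a cone, $g^1_3$ is cut out by its ruling, $P$ is the point whose ruling line is tritangent to $C$, and $\omega$ is the distinguished section of $K_C\cong\mathcal{O}_C(6P)$. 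Equivalently, the trigonal map $w\colon C\to\mathbb{P}^1$ attached to $|3P|$ is totally ramified at $P$. Placing $P$ over $w=\infty$, the extreme members of this family are those in which $C$ acquires, over a finite fibre of $w$, the unique $E_8$ plane curve singularity $z^3=w^5$; this has $\delta$-invariant $4$ and rational normalisation, so such a curve has arithmetic genus exactly $4$. I would prove that up to isomorphism there is a unique such curve $C_0$ together with its distinguished differential $\omega_0$, and that this pair plays the role of the centre of the moduli space.

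The heart of the argument is then a deformation-theoretic comparison. Since the normalisation of $C_0$ is a rigid $\mathbb{P}^1$, deformations of the pair $(C_0,\omega_0)$ are controlled entirely by deformations of the $E_8$ singularity; the deformation functor is unobstructed, with smooth base of dimension $\mu(E_8)=8$, which matches $\dim_{\mathbb{C}}\mathcal{H}^{\operatorname{even}}(6)=2g-1+n=8$, and it is versal. The monomials spanning the Milnor algebra of $z^3+w^5$ all have strictly smaller weight, so the deformation is quasi-homogeneous and the resulting $\mathbb{C}^*$-action on the base is precisely the rescaling of $\omega$. A generic deformation is a smooth genus-$4$ curve with even vanishing theta-null carrying its distinguished differential, hence lies in $\mathcal{H}^{\operatorname{even}}(6)$, while the complementary locus is the discriminant $\Delta_{E_8}$; one then shows that the classifying morphism identifies $\mathcal{H}^{\operatorname{even}}(6)$, as an orbifold, with the versal base minus $\Delta_{E_8}$, a $\mathbb{C}^*$-bundle over its projectivisation, with $(C_0,\omega_0)$ as origin. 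Since $\mathcal{H}^{\operatorname{even}}(6)$ is an entire connected component of the stratum, no competing boundary strata intervene. Brieskorn's theorem now identifies $\pi_1$ of the discriminant complement of a simple $E_8$ singularity with $A_{E_8}$, and Deligne's theorem gives its asphericity; the projectivised base has fundamental group $A_{E_8}/Z(A_{E_8})=\operatorname{Inn}(A_{E_8})$, and the $\mathbb{C}^*$-bundle recording the scale of $\omega$ reinstates a central $\mathbb{Z}$, so $\mathcal{H}^{\operatorname{even}}(6)$ is an orbifold $K(\pi,1)$ for a central extension of $\operatorname{Inn}(A_{E_8})$.

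I expect the main obstacle to be upgrading versality to a genuine global isomorphism of orbifolds $\mathcal{H}^{\operatorname{even}}(6)\cong(\text{versal base})\setminus\Delta_{E_8}$: this needs the uniqueness of $C_0$, a transversality and dimension count showing the classifying morphism is \'etale onto its image, and enough properness-type control near the boundary to conclude it is an isomorphism and not merely a local one. A subsidiary but genuinely delicate point is the bookkeeping of automorphisms --- of $C_0$, and of the germ $z^3=w^5$, whose automorphism group is precisely the weighted scaling torus --- since these fix the stacky structure and so determine which central extension of $\operatorname{Inn}(A_{E_8})$ actually occurs, which is why the statement is phrased only up to the extension. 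One should also verify that the loci where $Q$ degenerates further to a rank-$2$ quadric, or where $P$ becomes the cone vertex, are harmlessly contained in $\Delta_{E_8}$.
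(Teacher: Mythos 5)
Your proposal follows essentially the same route as the paper: identify $\mathbb{P}\mathcal{H}^{\operatorname{even}}(6)$ with the moduli of genus-$4$ curves pointed at a Weierstrass point with semigroup $\langle 3,5\rangle$ (your vanishing-theta-null/trigonal-cone description is exactly the gap-sequence computation $\mathfrak{G}_p(X)=\{1,2,4,7\}$ of Section 2), realise this as the $\mathbb{C}^*$-quotient of the versal deformation space of $x^3+y^5$, and apply Arnol'd--Brieskorn--Deligne before restoring the central $\mathbb{Z}$ from the $\mathbb{C}^*$-bundle. The two obstacles you flag at the end are precisely the points the paper settles by citation and by explicit computation: the global orbifold isomorphism with the discriminant complement is Pinkham's theorem for monomial curves (Theorem~\ref{pinkham}, so no separate \'etaleness or properness argument is needed), and the identification of the central kernel with $\langle\Delta_{E_8}\rangle$ rather than some other central cyclic subgroup follows from the Garside-element loop construction together with the facts that $-\operatorname{id}\in W_{E_8}$ and the invariant degrees of $W_{E_8}$ are all even with greatest common divisor $2$.
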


In an earlier work \cite{Giannini2023}, we studied the kernel of the topological monodromy maps of $\mathcal{H}^{\operatorname{odd}}(4)$ and $\mathcal{H}(3,1)$, homomorphisms that compare the orbifold fundamental groups $\pi_1^{orb}(\mathcal{H}^{\operatorname{odd}}(4))$ and $\pi_1^{orb}(\mathcal{H}(3,1))$ to the associated mapping class groups. Calderon-Salter described the images \cite{CalderonSalterFramed2022}, and it is natural to ask if monodromies are injective: kernel elements can be interpreted as homotopy classes of loops of some covering topological spaces, called \textit{Teichm\"uller strata of translations surfaces}. We proved that the monodromy maps of the stratum components $\mathcal{H}^{\operatorname{odd}}(4)$ and $\mathcal{H}(3,1)$ are highly non-injective, suggesting that Teichm\"uller strata might have an intricate topology. In particular, we proved that the kernels of the monodromies associated with the stratum components $\mathcal{H}(3,1)$ and $\mathcal{H}^{\operatorname{odd}}(4)$ contain a non-abelian free group of rank $2$. We extend our work to an additional stratum component.



\begin{theorem}
The kernel of the monodromy $\rho:\pi_1^{orb}(\mathcal{H}^{\operatorname{even}}(6))\rightarrow\operatorname{Mod}(\Sigma_{4,1})$ contains a non-abelian free group of rank $2$.
\end{theorem}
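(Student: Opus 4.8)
The plan is to transfer the question to the Artin group $A_{E_8}$ and its central quotient, and then to exploit acylindrical hyperbolicity. First I would make explicit the commutative picture underlying the first theorem (cf.\ \cite{Looijenga2014} and its proof): the orbifold fundamental group $\pi_1^{orb}(\mathcal{H}^{\operatorname{even}}(6))$ is a central extension $1\to C\to\pi_1^{orb}(\mathcal{H}^{\operatorname{even}}(6))\xrightarrow{\ \pi\ }\operatorname{Inn}(A_{E_8})\to1$ with $C$ cyclic (generated by the class of the $\mathbb{C}^{*}$-scaling loop), and there is a homomorphism $\Phi\colon A_{E_8}\to\pi_1^{orb}(\mathcal{H}^{\operatorname{even}}(6))$ carrying the standard generators to the monodromy loops around the boundary divisors, with $\pi\circ\Phi$ the tautological quotient $A_{E_8}\to\operatorname{Inn}(A_{E_8})$ and $\rho\circ\Phi=\phi$; here $\phi\colon A_{E_8}\to\operatorname{Mod}(\Sigma_{4,1})$ is the geometric monodromy representation $\sigma_i\mapsto T_{c_i}$ along a configuration $c_1,\dots,c_8$ of simple closed curves filling $\Sigma_{4,1}$ whose geometric intersection graph is the Dynkin diagram $E_8$ (equivalently, $\Sigma_{4,1}$ is the Milnor fibre of $x^{3}+y^{5}$ and the $c_i$ are its vanishing cycles). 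Let $\overline K:=\pi(\ker\rho)\trianglelefteq\operatorname{Inn}(A_{E_8})$; since $\Phi(\ker\phi)\subseteq\ker\rho$, the subgroup $\overline K$ contains the image of $\ker\phi$ in $\operatorname{Inn}(A_{E_8})$. The whole statement will follow once we know that $\overline K$ is \emph{infinite}.

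The heart of the matter, and the step I expect to require real work, is producing an element of $\ker\rho$ whose image in $\operatorname{Inn}(A_{E_8})$ has infinite order; equivalently, an identity among Dehn twists of curves assembled from $c_1,\dots,c_8$ that holds in $\operatorname{Mod}(\Sigma_{4,1})$ but is not a consequence of the Artin relations. This is precisely the input obtained for $E_6$ and $E_7$ in \cite{Giannini2023}, and I would proceed in the same way. The natural source is the failure of a universal surface relation (a chain relation, a star relation, or a lantern relation) to survive to $A_{E_8}/Z(A_{E_8})$: one passes to a proper connected subconfiguration — say a linear subchain of length $7$, or the $D_4$-star spanned by the trivalent node of $E_8$ and its three neighbours — whose regular neighbourhood is a proper subsurface of $\Sigma_{4,1}$. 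Because the full $E_8$-configuration fills $\Sigma_{4,1}$, the boundary curves of this subsurface become isotopic to one another in $\Sigma_{4,1}$ (more precisely they cobound once-punctured annuli, resp.\ pairs of pants with the puncture of $\Sigma_{4,1}$), so the boundary-twist side of the universal relation degenerates; the resulting relation among the $T_{c_i}$ is then not a consequence of the Artin relations, because on the $A_{E_8}$ side the relevant word is built from a parabolic Garside-type element that is non-central and of infinite order in $A_{E_8}/Z(A_{E_8})$ (a proper standard parabolic subgroup has trivial intersection with $Z(A_{E_8})$). Transporting this word through $\Phi$ gives $g\in\ker\rho$ with $\pi(g)$ of infinite order, so $\overline K$ is infinite.

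With $\overline K$ known to be infinite, I would conclude as follows. The group $\operatorname{Inn}(A_{E_8})=A_{E_8}/Z(A_{E_8})$ is acylindrically hyperbolic, $E_8$ being irreducible of spherical type and of rank $\geq3$ (Calvez--Wiest); by a theorem of Dahmani--Guirardel--Osin, every normal subgroup of an acylindrically hyperbolic group that is not contained in its finite radical contains a non-abelian free group of rank $2$. Since the finite radical is finite and $\overline K$ is infinite, $\overline K$ is not contained in it, hence $\overline K$ contains a copy $F$ of $F_2$. Finally one lifts $F$ into $\ker\rho$: the preimage $\pi^{-1}(\overline K)$ equals $\ker\rho\cdot C$ and is a central extension of $\overline K$ by a subgroup of $C$, and because $F_2$ has cohomological dimension $1$ this extension splits over $F$, giving $\widehat F\cong F_2$ inside $\pi^{-1}(\overline K)$; since $\ker\rho$ is normal in $\pi^{-1}(\overline K)$ with cyclic quotient, $\widehat F\cap\ker\rho$ is a normal subgroup of $F_2$ with cyclic quotient, hence itself free of rank $\geq2$, and it lies in $\ker\rho$. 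Therefore $\ker\rho$ contains a non-abelian free group of rank $2$. The only non-formal ingredient is the explicit collapsing relation of the second paragraph; everything else runs in complete parallel with the treatment of $\mathcal{H}^{\operatorname{odd}}(4)$ and $\mathcal{H}(3,1)$ in \cite{Giannini2023}.
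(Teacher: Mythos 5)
Your overall route is the same as the paper's: factor $\rho$ through the monodromy $\rho_{\mathbb{P}}$ of the projective stratum, identify $\pi_1^{orb}(\mathbb{P}\mathcal{H}^{\operatorname{even}}(6))$ with $\operatorname{Inn}(A_{E_8})$, recognise $\rho_{\mathbb{P}}$ as induced by the geometric (Picard--Lefschetz) monodromy $\phi\colon A_{E_8}\rightarrow\operatorname{Mod}(\Sigma_{4,1})$, use acylindrical hyperbolicity of $A_{E_8}/Z(A_{E_8})$ to promote an infinite normal subgroup to one containing $F_2$, and lift through the central cyclic extension (your cohomological-dimension argument for the lift is fine and is what the paper implicitly uses). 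The difference is in the crucial middle step. The paper constructs no new relation: it invokes Theorem \ref{artin} (that is, \cite[Theorem B]{Giannini2023}), which is stated for \emph{every} finite graph containing $E_6$ as a subgraph and therefore applies verbatim to $E_8$. That theorem already supplies an $F_2$ inside $\ker\phi$, which injects into $\operatorname{Inn}(A_{E_8})$ because $F_2$ has trivial centre. You instead re-open that black box and sketch a fresh construction of an infinite-order element of $\overline{K}$, under the impression that the earlier result covers only $E_6$ and $E_7$.

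That sketch is the one place where your argument is not yet a proof. First, the inference ``the full $E_8$-configuration fills $\Sigma_{4,1}$, hence the boundary curves of the regular neighbourhood of a proper subconfiguration cobound once-punctured annuli'' is not a formal consequence of filling; it must be verified for the chosen subconfiguration by computing the topology of its complement. Second, and more seriously, even when the two boundary curves of the $A_7$-neighbourhood do cobound a once-punctured annulus, the twists $T_{\partial_1}$ and $T_{\partial_2}$ remain distinct and non-trivial in $\operatorname{Mod}(\Sigma_{4,1})$, so the chain relation $(T_{c_1}\cdots T_{c_7})^{8}=T_{\partial_1}T_{\partial_2}$ does not by itself ``degenerate'' into an identity among the $T_{c_i}$: to manufacture an element of $\ker\phi$ one must play it against a second relation (Wajnryb's device of two subconfigurations with equal boundary multitwists), and the non-triviality and infinite order of the resulting word in $A_{E_8}/Z(A_{E_8})$ is then the hard content, which your paragraph asserts rather than proves. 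None of this is fatal --- it is exactly what \cite{Giannini2023} establishes --- but as written the key step is a plausible programme rather than an argument, and the cleanest repair is simply to cite Theorem \ref{artin}, as the paper does.
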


\subsection*{\small Structure of the paper.} Section $1$ covers some preliminary notions about the strata of translation surfaces. In Section $2$, we use gap sequences to describe the Riemann surfaces in the stratum component $\mathcal{H}^{\operatorname{even}}(6)$. In Section $3$, we introduce versal deformation spaces of plane curve singularities and Artin groups to study $\mathcal{H}^{\operatorname{even}}(6)$. In the last two section, we describe the orbifold fundamental group and prove that the kernel of the monodromy contains a non-abelian free group of rank $2$.

\subsection*{\small Acknowledgments.} The author would like to thank Tara Brendle and Vaibhav Gadre for their support during the drafting of the present note, as well as Dawei Chen for the inspiring conversation in Oberwolfach during the workshop ``Riemann Surfaces: Random, Flat, and Hyperbolic Geometry", and both Philipp Bader and Franco Rota for useful discussions on an earlier draft. The author is
supported by EPSRC grant EP/T517896/1.

\section{Preliminaries}

Strata of translation surfaces parameterize pairs $(X,\omega)$ where $X$ is a closed genus $g$ Riemann surface and $\omega$ is an abelian differential on $X$ with prescribed orders at the vanishing points\footnote{This definition of translation surface is equivalent to the one given in the introduction \cite[Section 1]{Wright2015}. Indeed, any holomorphic section of the cotangent bundle $\omega$ defined a polygonal representation, and viceversa.}. More precisely, let $k_1,\dots,k_n$ and $g$ be positive integers such that $\sum_{i=1}^nk_i=2g-2$. The stratum $\mathcal{H}(k_1,\dots,k_n)$ is the set of all equivalence classes of genus $g$ translation surfaces $(X,\omega)$ such that that $\omega$ has vanishing locus consisting of $n$ points $p_1,\dots,p_n$ such that $\operatorname{ord}_{p_i}(\omega)=k_i$ for every $i\in\{1,\dots,n\}$. In particular, any two translation surfaces $(X_1,\omega_1)$ and $(X_2,\omega_2)$ are in the same equivalence class of $\mathcal{H}(k_1,\dots,k_n)$ if and only if there is some bilohomorphism $I:X_1\rightarrow X_2$ such that $I^*\omega_2=\omega_1$.\\

Strata are not necessarily connected, and Kontsevich-Zorich classified the connected components \cite{Kontsevich2003}. The minimal strata $\mathcal{H}(2g-2)$ in genus $g\geq 4$ have exactly three connected components. One stratum component parametrizes hyperelliptic translation surfaces $(X,\omega)$ where $X$ is a hyperelliptic Riemann surface and the hyperelliptic involution $\iota\in\operatorname{Aut}(X)$ preserves the abelian differential $\omega\in\Omega^1(X)$ up to sign, that is $\iota^*\omega=-\omega$. The remaining two stratum components do not contain hyperelliptic Riemann surfaces and are \textit{totally non-hyperelliptic}. The non-hyperelliptic components on $\mathcal{H}(2g-2)$ are distinguished by the parity of their \textit{spin structures}. \\

Indeed, a holomorphic abelian differential $\omega\in\Omega^1(X)$ on a closed Riemann surface $X$ defines an effective canonical divisor $\operatorname{div}(\omega)$. If $(X,\omega)$ is a translation surface in a stratum of the form $\mathcal{H}(2k_1,\dots,2k_n)$, the divisor $\operatorname{div}(\omega)/2$ corresponds to a section of some line bundle $\mathcal{L}$ on $X$ such that $\mathcal{L}^{\otimes 2}$ is the canonical cotangent bundle $K_X$ of $X$. The line bundle $\mathcal{L}$ is a spin structure of $(X,\omega)$ and its parity is the complex dimension $h^0(X,\mathcal{L})\mod 2$ of the space of holomorphic sections $ X\rightarrow \mathcal{L}$. The non-hyperelliptic components of the minimal strata $\mathcal{H}(2g-2)$ are denoted by $\mathcal{H}^{\operatorname{odd}}(2g-2)$ and $\mathcal{H}^{\operatorname{even}}(2g-2)$ according to the parity of the spin structures of their abelian differentials.\\

Strata of translation surfaces come with a natural topology inherited by their \textit{Teichm\"uller covers}, similarly as the moduli spaces of genus $g$ Riemann surfaces $\mathcal{M}_g$ get their topology from the Teichm\"uller spaces $\mathcal{T}_g$. To define the Teichm\"uller cover $\mathcal{TC}$ of a stratum component $\mathcal{C}$, we begin by fixing a topological genus $g$ surface with $n$ marked points $q_1,\dots,q_n$ denoted by $\Sigma_{g,n}$. The space $\mathcal{TC}$ is the set of triples $(X,f,\omega)$ where $(X,\omega)\in\mathcal{C}$ and $f$ is a \textit{marking} for $(X,\omega)$. A marking of a translation surface $(X,\omega)$ is the homotopy class of a diffeomorphism $\Sigma_{g,n}\rightarrow X\setminus$ such that $f(q_i)=p_i$ and the homotopies are considered relative to the set of marked points. In other words, two triples $(X_1,f_1,\omega_1)$ and $(X_2,f_2,\omega_2)$ are in the same equivalence class if and only if there exists a bilohomorphism $I:X_1\rightarrow X_2$ such that $I^*\omega_2=\omega_1$ and $f_2$ is homotopic to $I\circ f_1$ relative the marked points.\\

The Teichm\"uller cover of a stratum component $\mathcal{C}$ of $\mathcal{H}(k_1,\dots,k_n)$ is a manifold of dimension $2g+n-1$. In order to define its topology, we begin by fixing a triangulation $\tau$ of $\Sigma_{g,n}$ where the vertices are marked points. We also define the set $U_\tau$ of triples $(X,f,\omega)\in\mathcal{T}\mathcal{C}$ where $f(\tau)$ is a triangulation of $(X,\omega)$ via saddle connections, namely geodesic arcs intersecting the zeros of $\omega$ only at the endpoints.  If $\{\gamma_1,\dots,\gamma_{2g+n-1}\}$ is a fixed basis of homology group $H_1(\Sigma_{g,n},\mathbb{Z})$, the map $U_\tau\rightarrow H^1(\Sigma_{g,n},\mathbb{C})$ assigning every triple $(X,f,\omega)$ the linear map $(\gamma_i\mapsto\int_{f_*\gamma_i}\omega)_{i=1}^{2g-n+1}$
is one-to-one and provide $\mathcal{T}\mathcal{C}$ with a smooth atlas.\\

We are interested in the isomorphism classes of the orbifold fundamental groups $\pi_1^{orb}(\mathcal{C})$, where $\mathcal{C}$ is any non-hyperelliptic stratum component. If $(X,f,\omega)$ is a fixed point in $\mathcal{TC}$, the group $\pi_1^{orb}(\mathcal{C})$
is the set of pairs $(\gamma,\phi)$ where $\phi\in\operatorname{Mod}(\Sigma_{g,n})$ $\gamma$ is the homotopy class, relative to the endpoints, of an arc in $\mathcal{TC}$ connecting $(X,f,\omega)$ with $(X,f\phi^{-1},\omega)$. The binary operation on $\pi_1^{orb}(\mathcal{C})$ is given by the composition law $(\gamma_1,\phi_1)(\gamma_2,\phi_2)=(\gamma_1*(\phi_1\cdot\gamma_2),\phi_1\phi_2)$. In general, no description of the isomorphism classes of $\pi_1^{orb}(\mathcal{C})$ is available. However, in a few cases, we can compute a presentation for a quotient of $\pi_1^{orb}(\mathcal{C})$ that does not factor through the mapping class group $\operatorname{Mod}(\Sigma_{g,n})$ of the underlying punctured surface.\\

The punctured complex plane $\mathbb{C}^*$ acts continuously on any strata $\mathcal{H}(k_1,\dots,k_n)$ by multiplication on the abelian differentials.  The resulting quotient space, denoted by $\mathbb{P}\mathcal{H}(k_1,\dots,k_n)$, is a \textit{projective stratum}. If $\mathcal{C}$ is a connected component of $\mathcal{H}(k_1,\dots,k_n)$, the isomorphism classes of the orbifold fundamental group $\mathbb{P}\mathcal{C}$ has been computed by Looijenga-Mondello in most cases for $g=3$ \cite[Theorem 1.1, 5.3, 5.4]{Looijenga2014}.\\

Any projective stratum component is a \textit{good orbifold}: if $\mathcal{C}$ is a stratum component, its projectivization $\mathbb{P}\mathcal{C}$ is the quotient of a smooth manifold by the action of a discrete group. Specifically, there is a subgroup of the mapping class group $\operatorname{Mod}(\Sigma_{g,n})$ acting on the smooth manifold $\mathbb{P}(\mathcal{TC})$, defined as $\mathcal{TC}/\mathbb{C}^*$, so that the resulting quotient is $\mathbb{P}\mathcal{C}$. The punctured complex plane $\mathbb{C}^*$ acts freely and properly on each Teichm\"uller stratum component $\mathcal{TC}$ and by the Quotient Manifold Theorem $\mathbb{P}(\mathcal{TC})$ is a smooth manifold of dimension $\operatorname{dim}\mathcal{C}-1$. Moreover, the quotient map $q_\mathcal{C}:\mathcal{TC}\rightarrow\mathbb{P}(\mathcal{TC})$ is a smooth submersion and every $q_\mathcal{C}$ is a principal $\mathbb{C}^*$-bundle.\\

The relation between the orbifold fundamental groups of a stratum component $\mathcal{C}$ and its projectivization $\mathbb{P}\mathcal{C}$ is summarized below in Proposition \ref{projpi}. 

\begin{proposition}
\label{projpi}
    Let $\mathcal{C}$ be a stratum component. The following is a central short exact sequence
    $$0\rightarrow\pi_1(\mathbb{C^*})\rightarrow\pi_1^{orb}(\mathcal{C})\rightarrow\pi_1^{orb}(\mathbb{P}\mathcal{C})\rightarrow1.$$
\end{proposition}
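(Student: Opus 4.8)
The plan is to realize $\mathbb{P}\mathcal{C}$ as the orbit space of a smooth free proper action of a discrete group and to read off the stated sequence from the long exact sequence of a fibration together with the defining short exact sequence of orbifold fundamental groups of a good orbifold. Concretely, recall from the discussion above that $q_{\mathcal{C}}\colon \mathcal{TC}\to\mathbb{P}(\mathcal{TC})$ is a principal $\mathbb{C}^*$-bundle; since $\mathbb{C}^*$ is connected, this is a fibration with connected fiber, and $\pi_2(\mathbb{P}(\mathcal{TC}))\to\pi_1(\mathbb{C}^*)\to\pi_1(\mathcal{TC})\to\pi_1(\mathbb{P}(\mathcal{TC}))\to\pi_0(\mathbb{C}^*)=1$ is exact. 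The total and base spaces here are the honest manifolds covering $\mathcal{C}$ and $\mathbb{P}\mathcal{C}$; the orbifold fundamental groups fit into
\begin{equation*}
1\to\pi_1(\mathcal{TC})\to\pi_1^{orb}(\mathcal{C})\to G\to 1,\qquad 1\to\pi_1(\mathbb{P}(\mathcal{TC}))\to\pi_1^{orb}(\mathbb{P}\mathcal{C})\to G\to 1,
\end{equation*}
for the \emph{same} discrete group $G\le\operatorname{Mod}(\Sigma_{g,n})$, because the $\mathbb{C}^*$-action commutes with the mapping class group action and descends to identify the two orbifold presentations.

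First I would make precise that the principal $\mathbb{C}^*$-bundle $q_{\mathcal{C}}$ is $G$-equivariant and hence descends to a map of orbifolds $\mathcal{C}\to\mathbb{P}\mathcal{C}$; this is essentially the content of the paragraph preceding the statement. Next I would invoke the homotopy exact sequence of $q_{\mathcal{C}}$. The key simplification is that $\mathbb{C}^*\simeq S^1$, so $\pi_1(\mathbb{C}^*)\cong\mathbb{Z}$ and $\pi_{\ge 2}(\mathbb{C}^*)=0$; therefore the connecting map $\pi_2(\mathbb{P}(\mathcal{TC}))\to\pi_1(\mathbb{C}^*)$ need not vanish a priori, but its image lands in a central subgroup, and after passing to the orbifold groups the relevant sequence still starts with an injection of $\pi_1(\mathbb{C}^*)$ provided $q_{\mathcal{C}}$ admits no section obstruction killing it — equivalently, provided the bundle is nontrivial on $\pi_1$, which holds because the Teichm\"uller stratum retracts onto a finite complex on which the Euler class of the $\mathbb{C}^*$-bundle is nonzero. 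I would phrase this more robustly by noting that $\mathcal{TC}$ is a manifold on which $\mathbb{C}^*$ acts freely and properly, so $\mathcal{TC}\to\mathbb{P}(\mathcal{TC})$ is a fiber bundle with contractible-universal-cover fiber $\mathbb{C}^*$, and the sequence $1\to\pi_1(\mathbb{C}^*)\to\pi_1(\mathcal{TC})\to\pi_1(\mathbb{P}(\mathcal{TC}))\to 1$ is exact and central once one checks $\pi_1(\mathbb{C}^*)$ injects — which follows since $\mathbb{C}^*$ is aspherical and the bundle is classified by a map to $B\mathbb{C}^*\simeq\mathbb{CP}^\infty$, giving a short exact sequence of groups with no $\pi_2$ contribution because $\mathbb{C}^*$ has a free $\mathbb{Z}$ as its whole homotopy.

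Then I would assemble the two group extensions above with the short exact sequence on the fibers. Centrality: the image of $\pi_1(\mathbb{C}^*)$ in $\pi_1(\mathcal{TC})$ is central because it is carried by the $\mathbb{C}^*$-action which is by bundle automorphisms over the base; and mapping-class elements act on $\mathcal{TC}$ commuting with scaling, so conjugation by any lift of $\phi\in G$ fixes the loop class coming from $\mathbb{C}^*$. Hence $\pi_1(\mathbb{C}^*)$ is central in all of $\pi_1^{orb}(\mathcal{C})$. A short diagram chase (the nine-lemma, or just comparing the two extensions of $G$) yields the exactness of
\begin{equation*}
0\to\pi_1(\mathbb{C}^*)\to\pi_1^{orb}(\mathcal{C})\to\pi_1^{orb}(\mathbb{P}\mathcal{C})\to 1,
\end{equation*}
with the left map central, which is exactly the claim.

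The main obstacle I anticipate is the injectivity of $\pi_1(\mathbb{C}^*)\to\pi_1^{orb}(\mathcal{C})$: one must rule out that the generator of $\pi_1(\mathbb{C}^*)\cong\mathbb{Z}$ becomes trivial or torsion in $\pi_1^{orb}(\mathcal{C})$. The clean way to settle this is to exhibit the $\mathbb{C}^*$-orbit of a point $(X,f,\omega)$ as a noncontractible loop in $\mathcal{TC}$ that stays noncontractible in the orbifold quotient; equivalently, one checks that the principal $\mathbb{C}^*$-bundle $q_{\mathcal{C}}$ restricted to some map from a circle has nonzero degree, e.g.\ by pairing with the tautological class $[\omega]$ in de Rham cohomology, which is nonzero on a $\mathbb{C}^*$-orbit since scaling $\omega$ by $e^{2\pi i t}$ traces a genuine loop detected by the period map $U_\tau\to H^1(\Sigma_{g,n},\mathbb{C})$ described earlier. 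Everything else is formal homotopy theory of fiber bundles and group extensions.
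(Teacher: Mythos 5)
Your overall route is the same as the paper's: use the homotopy exact sequence of the principal $\mathbb{C}^*$-bundle $q_{\mathcal{C}}\colon\mathcal{TC}\to\mathbb{P}(\mathcal{TC})$, observe that the same subgroup $G\leq\operatorname{Mod}(\Sigma_{g,n})$ acts compatibly on total space and base so that the two orbifold fundamental groups are extensions of the same $G$, and deduce centrality from the fact that the scaling action commutes with the marking action. Your surjectivity and centrality arguments are fine and coincide with what the paper does (the paper even writes the map explicitly as $(\gamma,\phi)\mapsto(q_{\mathcal{C}}(\gamma),\phi)$ and identifies the kernel with a fiber loop).

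The genuine problem is your treatment of injectivity of $\pi_1(\mathbb{C}^*)\to\pi_1(\mathcal{TC})$. You are right that this is the one non-formal point (the paper simply asserts that the kernel \emph{is} $\pi_1(\mathbb{C}^*)$), but the justifications you offer do not work. Asphericity of the fiber does not kill the connecting map $\partial\colon\pi_2(\mathbb{P}(\mathcal{TC}))\to\pi_1(\mathbb{C}^*)$: the principal $\mathbb{C}^*$-bundle $\mathbb{C}^2\setminus\{0\}\to\mathbb{CP}^1$ has aspherical fiber and $\partial$ an isomorphism, so ``no $\pi_2$ contribution because $\mathbb{C}^*$ has a free $\mathbb{Z}$ as its whole homotopy'' is not a valid inference --- the relevant $\pi_2$ is that of the \emph{base}, not the fiber. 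Worse, you invoke a \emph{nonzero} Euler class as the reason injectivity holds; since $\partial$ is precisely evaluation of the Euler class against spherical classes, a nonzero Euler class is evidence \emph{against} injectivity, not for it (same example). Finally, the period-map detection in your last paragraph also fails: the fiber loop over $(X,f,\omega)$ maps under the $\mathbb{C}^*$-equivariant period map to the loop $t\mapsto e^{2\pi it}v$ in $H^1(\Sigma_{g,n},\mathbb{C})\setminus\{0\}$, and this space is simply connected because its complex dimension $2g+n-1$ is at least $2$, so no pairing or degree survives there. What is actually needed is that the Euler class of $q_{\mathcal{C}}$ vanishes on the image of the Hurewicz map $\pi_2(\mathbb{P}(\mathcal{TC}))\to H_2(\mathbb{P}(\mathcal{TC}))$ --- for instance because $\mathbb{P}(\mathcal{TC})$ is aspherical, which for $\mathcal{H}^{\operatorname{even}}(6)$ follows from the $K(\pi,1)$ result proved later in the paper but is not supplied by any of the three arguments you give.
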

\begin{proof}
The map $q_\mathcal{C}$ induces a surjection on fundamental groups since $\mathbb{C}^*$ is connected. Moreover, the same subgroup of the mapping class group acts on both $\mathbb{P}(\mathcal{TC})$ and $\mathcal{TC}$ so to obtain the quotients $\mathbb{P}\mathcal{C}$ and $\mathcal{C}$, respectively. The map $q_\mathcal{C}$ induces a surjection $\pi_1^{orb}(\mathcal{C})\twoheadrightarrow\pi_1^{orb}(\mathbb{P}\mathcal{C})$ on the orbifolds fundamental groups given by $(\gamma,\phi)\mapsto(q_{\mathcal{C}}(\gamma),\phi)$. The kernel is isomorphic to $\pi_1(\mathbb{C^*})$ and generated by a loop in a fiber of $q_\mathcal{C}$ that commutes with every pair $(\gamma,\phi)\in\pi_1^{orb}(\mathcal{C})$. 
\end{proof}

The projective stratum components $\mathbb{P}\mathcal{C}$ parameterize the isomorphism classes of pairs $(X,D)$, where $X$ is a closed Riemann surface and $D$ is an effective canonical divisor with prescribed multiplicities provided by the stratum component $\mathcal{C}$. The pairs $(X_1,D_1)$ and $(X_2,D_2)$ are equivalent in $\mathbb{P}\mathcal{C}$ if there exists a bilohomorphism $I:X_1\rightarrow X_2$ such that $I^*D_2=D_1$. If $\mathcal{C}$ is a minimal stratum component, its projectivization $\mathbb{P}\mathcal{C}$ can be projected in $\mathcal{M}_{g,1}$, the moduli space of pointed Riemann surfaces. 

\begin{proposition}
\label{iso}
    Let $\mathcal{C}$ be a connected component of  $\mathbb{P}\mathcal{H}(2g-2)$. The forgetful map $\mathbb{P}\mathcal{C}\rightarrow\mathcal{M}_{g,1}$ defined by $(X,(2g-2)p)\mapsto(X,p)$ is an orbifold isomorphism onto its image provided the dimension of the image is $2g-1$.
\end{proposition}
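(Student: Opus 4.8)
The plan is to verify that $\Phi\colon\mathbb{P}\mathcal{C}\to\mathcal{M}_{g,1}$, $(X,(2g-2)p)\mapsto(X,p)$, satisfies the three conditions making a morphism of complex orbifolds an isomorphism onto a suborbifold: injectivity on underlying spaces, bijectivity on all local isotropy groups, and being an embedding whose image is a suborbifold of dimension $\dim\mathbb{P}\mathcal{C}=2g-1$. The dimension hypothesis is what secures the third condition once the first two are in hand.

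For injectivity I would use that the abelian differential is recovered from $(X,p)$: since $\deg\!\bigl(K_X-(2g-2)p\bigr)=0$, one has $h^{0}\!\bigl(X,K_X(-(2g-2)p)\bigr)\le 1$, with equality exactly when $(2g-2)p$ is a canonical divisor, and in that case the differential $\omega$ with $\operatorname{div}(\omega)=(2g-2)p$ is unique up to scaling. Hence the fibres of $\Phi$ are singletons and the image is the subcanonical locus $\{(X,p)\in\mathcal{M}_{g,1}:(2g-2)p\sim K_X\}$. For the isotropy groups, the automorphism group of $[(X,(2g-2)p)]$ in $\mathbb{P}\mathcal{C}$ is $\{I\in\operatorname{Aut}(X):I^{*}\bigl((2g-2)p\bigr)=(2g-2)p\}=\operatorname{Stab}_p\bigl(\operatorname{Aut}(X)\bigr)$, which is exactly the automorphism group of $[(X,p)]$ in $\mathcal{M}_{g,1}$, because every biholomorphism fixing $p$ preserves the divisor $(2g-2)p$.

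It then remains to show $\Phi$ is an embedding onto a $(2g-1)$-dimensional suborbifold. I would first check that $\Phi$ is an immersion by a first-order deformation argument, again using $h^{0}\!\bigl(X,K_X(-(2g-2)p)\bigr)\le 1$: an infinitesimal deformation of $(X,\omega)$ within the stratum that induces the trivial deformation of $(X,p)$ can only rescale $\omega$, hence is zero in the projectivised tangent space, so $d\Phi$ is everywhere injective. To pass from injective immersion to embedding I would work on the Teichm\"uller covers: $\Phi$ lifts to a $\Gamma$-equivariant injective holomorphic immersion $\widetilde\Phi\colon\mathbb{P}(\mathcal{TC})\to\mathcal{T}_{g,1}$, where $\Gamma\le\operatorname{Mod}(\Sigma_{g,1})$ is the subgroup with $\mathbb{P}(\mathcal{TC})/\Gamma=\mathbb{P}\mathcal{C}$. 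Under the dimension hypothesis the image of $\widetilde\Phi$ is a locally closed complex submanifold of $\mathcal{T}_{g,1}$ of dimension $2g-1$ and $\widetilde\Phi$ is a biholomorphism onto it; this submanifold is a connected component of the full subcanonical locus in $\mathcal{T}_{g,1}$, and its $\operatorname{Mod}(\Sigma_{g,1})$-translates (the connected components of the preimage of $\Phi(\mathbb{P}\mathcal{C})$) are pairwise disjoint or equal with common stabiliser $\Gamma$. Dividing by $\Gamma$ then exhibits $\mathbb{P}\mathcal{C}=\widetilde\Phi(\mathbb{P}(\mathcal{TC}))/\Gamma\hookrightarrow\mathcal{T}_{g,1}/\operatorname{Mod}(\Sigma_{g,1})=\mathcal{M}_{g,1}$ as the desired orbifold isomorphism onto the suborbifold $\Phi(\mathbb{P}\mathcal{C})$, a connected component of the subcanonical locus of $\mathcal{M}_{g,1}$.

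The main obstacle I anticipate is the step upgrading the injective immersion $\widetilde\Phi$ to an embedding with locally closed image of the correct dimension: an injective holomorphic immersion need not be a homeomorphism onto its image, so one must genuinely use the hypothesis that the image has dimension $2g-1$. Alternatively one can argue directly that $\Phi$ is proper, since the subcanonical locus is closed in $\mathcal{M}_{g,1}$ by upper semicontinuity of $h^{0}\!\bigl(X,K_X(-(2g-2)p)\bigr)$ and the unique differential varies continuously, whence preimages of convergent sequences converge; everything else is a routine unwinding of definitions.
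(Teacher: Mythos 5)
Your argument reaches the right conclusion and its skeleton (lift the forgetful map to a $\operatorname{Mod}(\Sigma_{g,1})$-equivariant injective map $\mathbb{P}(\mathcal{TC})\to\mathcal{T}_{g,1}$, then descend) is the same as the paper's, but the final and crucial step is handled quite differently. The paper disposes of the ``injective but maybe not an embedding'' issue in one line: since the image is by hypothesis a manifold of dimension $2g-1=\dim\mathbb{P}(\mathcal{TC})$, a bijective continuous map between boundaryless manifolds of the same dimension is a homeomorphism by invariance of domain. This is purely topological and uses nothing beyond continuity and injectivity, so the holomorphic immersion computation, the first-order deformation argument, and the properness discussion you propose are all unnecessary; in fact the ``main obstacle'' you flag at the end is precisely the point at which invariance of domain, fed the dimension hypothesis, closes the argument with no further work. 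Your extra material is not wrong and some of it is genuinely clarifying --- the observation that $h^{0}\bigl(X,K_X(-(2g-2)p)\bigr)\le 1$ recovers $\omega$ up to scale, and the check that isotropy groups match, are details the paper leaves implicit (though note that with the paper's definition of $\mathbb{P}\mathcal{C}$ as parametrizing pairs $(X,(2g-2)p)$, injectivity is immediate because the divisor is determined by the point, so even the $h^{0}$ argument is more than is needed). The trade-off is that your route, if completed via properness, would need you to verify that the image of the single component $\mathbb{P}\mathcal{C}$ (not just the whole subcanonical locus) is closed or locally closed, which is an additional burden the invariance-of-domain argument avoids entirely.
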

\begin{proof}
    The forgetful map is induced by the $\operatorname{Mod}(\Sigma_{g,1})$-equivariant continuous map $\mathbb{P}(\mathcal{T}\mathcal{C})\rightarrow\mathcal{T}_{g,1}$ given by mapping the triple $(X,f,(2g-2)p)$ to $(X,f,p)$. Any bijective continuous map between manifolds of the same dimension and without boundary is a homeomorphism by the invariance of domain Theorem.
\end{proof}

\section{Gap sequences in genus 4}

In this section, we describe the image of the minimal stratum component $\mathcal{H}^{\operatorname{even}}(6)$ in $\mathcal{M}_{4,1}$. A general reference is \cite[Chapter VII, Section 4]{Miranda}. \\

Each genus $g$  pointed closed Riemann surface $(X,p)$ comes with a sequence of $g$ integers $\mathfrak{G}_p(X)$ called the \textit{Weierstrass gap sequence}. A positive integer $n$ is a Weierstrass gap number in $\mathfrak{G}_p(X)$ if and only if there is an abelian differential on $X$ with a zero at $p$ of order $n-1$. The complement $\Gamma_p(X)$ of a gap sequence $\mathfrak{G}_p(X)$ in $\mathbb{N}$ is called a \textit{non-gap sequence} and it is a numerical semigroup. Given an arbitrary numerical semigroup $\Gamma$ in $\mathbb{N}$, we denote by $\mathcal{M}_{g,1}^{\Gamma}$ the moduli space of pointed Riemann surfaces $(X,p)$ such that the non-gap sequence at $p\in X$ is exactly $\Gamma$. If $X$ is hyperelliptic and $p$ is preserved by the hyperelliptic involution of $X$, the Weiestrass gap sequence $\mathfrak{G}_p(X)$ is $\{1,3,5,\dots,2g-1\}$. We will show the following.

\begin{proposition}
\label{gapstrata}
    Let $\Gamma$ be the semigroup generated by $3$ and $5$. Then, a pointed Riemann surface $(X,p)$ is in $\mathcal{M}_{4,1}^{\Gamma}$ if and only if $(X,6p)\in\mathbb{P}\mathcal{H}^{\operatorname{even}}(6)$.
\end{proposition}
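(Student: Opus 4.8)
The plan is to translate membership in $\mathbb{P}\mathcal{H}^{\operatorname{even}}(6)$ into conditions on the Weierstrass gap sequence $\mathfrak{G}_p(X)$, exploiting that all the canonical data is concentrated at the single point $p$. I would start from the elementary observation that, since $\deg K_X=2g-2=6$, a holomorphic differential on $X$ vanishing at $p$ to order exactly $6$ must have divisor $6p$; hence, by the definition of the gap sequence, $(X,6p)\in\mathbb{P}\mathcal{H}(6)$ if and only if $7\in\mathfrak{G}_p(X)$, equivalently $7\notin\Gamma_p(X)$.

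The next step is to list the numerical semigroups of genus $4$ whose gap set contains $7$. Since the largest Weierstrass gap is at most $2g-1=7$ and $1$ is always a gap, such a gap set has the form $\{1,a,b,7\}$ with $2\le a<b\le 6$, and requiring that its complement in $\mathbb{N}$ be closed under addition is a short finite check over the $\binom{5}{2}=10$ choices of $\{a,b\}$. This leaves exactly three semigroups: $\langle 2,9\rangle$ with gaps $\{1,3,5,7\}$, $\langle 3,5\rangle$ with gaps $\{1,2,4,7\}$, and $\langle 4,5,6\rangle$ with gaps $\{1,2,3,7\}$.

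It then remains to decide, for each of these, which Kontsevich--Zorich component of $\mathcal{H}(6)$ (recalled in Section~1) the surface $(X,6p)$ belongs to. If $\Gamma_p(X)=\langle 2,9\rangle$ then $2$ is a non-gap, so $h^0(X,\mathcal{O}_X(2p))=2$, the curve $X$ carries a degree-$2$ map to $\mathbb{P}^1$, and $(X,6p)$ lies in the hyperelliptic component; conversely, since the gap sequence at any point of a hyperelliptic curve is either $\{1,3,\dots,2g-1\}$ or $\{1,2,\dots,g\}$, and neither of these equals $\{1,2,4,7\}$ or $\{1,2,3,7\}$, in the remaining two cases $X$ is non-hyperelliptic and $(X,6p)$ lies in $\mathbb{P}\mathcal{H}^{\operatorname{odd}}(6)$ or $\mathbb{P}\mathcal{H}^{\operatorname{even}}(6)$. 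To separate these two I would use that the spin structure attached to $(X,6p)$ is $L=\mathcal{O}_X(3p)$, since $L^{\otimes 2}\cong\mathcal{O}_X(6p)\cong K_X$, and that its parity is $h^0(X,L)\bmod 2$; because $h^0(X,\mathcal{O}_X(np))$ is the number of non-gaps (including $0$) that are at most $n$, we obtain $h^0(X,\mathcal{O}_X(3p))=1$ for $\langle 4,5,6\rangle$ and $h^0(X,\mathcal{O}_X(3p))=2$ for $\langle 3,5\rangle$. Hence $\langle 4,5,6\rangle$ gives the odd component and $\langle 3,5\rangle$ the even one, and putting the three steps together yields $(X,6p)\in\mathbb{P}\mathcal{H}^{\operatorname{even}}(6)$ exactly when $\Gamma_p(X)=\langle 3,5\rangle$, i.e. when $(X,p)\in\mathcal{M}_{4,1}^{\Gamma}$.

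I expect the only delicate point to be organizational: making the enumeration in the second step genuinely exhaustive and importing the correct structural facts --- that the genus-$4$ minimal stratum has precisely the hyperelliptic, odd, and even components, and that a hyperelliptic curve realizes only the two gap sequences listed above --- with everything else reducing to Riemann--Roch bookkeeping. I would also remark that the resulting correspondence is a set-theoretic identification of the underlying orbifold points, compatible with Proposition~\ref{iso}, and that both $\mathbb{P}\mathcal{H}^{\operatorname{even}}(6)$ and $\mathcal{M}_{4,1}^{\Gamma}$ have dimension $2g-1=7$.
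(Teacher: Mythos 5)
Your proof is correct, but it reaches the key classification of gap sequences by a genuinely different route than the paper. The paper imports Lemma~\ref{p3} (Bullock), which uses the canonical model of a non-hyperelliptic genus-$4$ curve as a complete intersection of a quadric and a cubic in $\mathbb{P}^3$: the dichotomy ``smooth quadric vs.\ quadric cone'' yields the two candidate gap sequences $\{1,2,3,7\}$ and $\{1,2,4,7\}$, after which the parity of $\mathcal{L}=\mathcal{O}_X(3p)$ is computed exactly as you do, by counting gaps $\geq g$. You instead bypass the projective geometry entirely: you observe that $(X,6p)\in\mathbb{P}\mathcal{H}(6)$ iff $7$ is a gap, enumerate the numerical semigroups of genus $4$ with $7$ in the gap set (correctly finding $\langle 2,9\rangle$, $\langle 3,5\rangle$, $\langle 4,5,6\rangle$), discard $\langle 2,9\rangle$ as the hyperelliptic case using the Kontsevich--Zorich trichotomy from Section~1, and separate the remaining two by the same Riemann--Roch parity count. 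Your argument is more elementary and self-contained --- it needs only the Weierstrass gap theorem, the semigroup property of non-gaps, and the standard gap sequences on hyperelliptic curves --- and as a by-product it matches all three components of $\mathbb{P}\mathcal{H}(6)$ with their semigroups. What it does not provide is the explicit quadric-cone model of the curves in $\mathcal{M}_{4,1}^{\Gamma}$, which the paper still needs later (in the sketch of Lemma~\ref{chen}) to exhibit $\mathcal{M}_{4,1}^{\Gamma}$ as a $\mathbb{C}^*$-quotient of an affine parameter space and compute its dimension; so Lemma~\ref{p3} is not made redundant, only unnecessary for this particular proposition.
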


Suppose $X$ is a non-hyperelliptic Riemann surface of genus $4$. Then, the class of the canonical divisors $K_X$ induces a holomorphic embedding $\phi:X\rightarrow\mathbb{P}^3$ of $X$ as a smooth degree $6$ curve. A consequence of Max Noether's Theorem for algebraic surfaces is that $X$ is the complete intersection of an irreducible quartic $Q$ and an irreducible cubic $C$ in $\mathbb{P}^3$. Irreducible quartics on $\mathbb{P}^3$ can either be smooth or singular cones. In the first case, the Segre embedding can be use to show that $Q$ is isomorphic to $\mathbb{P}^1\times\mathbb{P}^1$. Otherwise, the irreducible quadric $Q$ is a cone and, up to some change of coordinates, the vanishing locus of the homogeneous polynomial $x_0^2-x_1x_2$ in $\mathbb{P}^3$. The following can be found in \cite[Section 4.3]{Bullock}.

\begin{lemma}
\label{p3}
    Let $(X,6p)\in\mathbb{P}\mathcal{H}(6)$ and suppose $X$ is a non-hyperelliptic smooth degree $6$ curve in $\mathbb{P}^3$ that is a complete intersection of an irreducible quartic $Q$ and an irreducible cubic $C$. Then,
    \begin{itemize}
        \item if $Q$ is smooth, the Weierstrass gap sequence of $(X,p)$ is $\mathfrak{G}_p(X)=\{1,2,3,7\}$;
        \item if $Q$ is a cone, the Weierstrass gap sequence of $(X,p)$ is $\mathfrak{G}_p(X)=\{1,2,4,7\}$.
    \end{itemize}
\end{lemma}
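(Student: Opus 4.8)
The plan is to prove Lemma \ref{p3} by using the canonical embedding $\phi : X \hookrightarrow \mathbb{P}^3$ and computing, for each of the two cases (smooth quadric vs. quadric cone), which differentials in the $4$-dimensional space $H^0(X, K_X)$ vanish to high order at $p$. The Weierstrass gap $n$ belongs to $\mathfrak{G}_p(X)$ exactly when there is $\omega \in H^0(X,K_X)$ with $\mathrm{ord}_p(\omega) = n-1$; since $X$ is canonically embedded, every $\omega$ is the restriction of a hyperplane section, so I will translate the order-of-vanishing condition into the order of contact at $\phi(p)$ of hyperplanes in $\mathbb{P}^3$ with the curve $X$. The key geometric input is that the ambient quadric $Q$ containing $X$ severely constrains these contact orders: a hyperplane through $\phi(p)$ that is tangent to $X$ there must, roughly speaking, meet $Q$ in a conic (or pair of lines) that is tangent to the curve, and the structure of $Q$ near $\phi(p)$ determines how high the contact can climb.

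**First I would** set up coordinates adapted to $p$. Because $(X, 6p) \in \mathbb{P}\mathcal{H}(6)$, the canonical divisor $K_X$ is linearly equivalent to $6p$, which means some hyperplane $H_0$ meets $X$ only at $\phi(p)$, with contact order $6$ — equivalently, the osculating hyperplane at $\phi(p)$ meets $X$ with multiplicity $6$, forcing $\phi(p)$ to be a total inflection point, and this pins down the local structure: in suitable affine coordinates the branch of $X$ through $\phi(p)$ has a local parametrization $(t, t^3 + \dots, t^5 + \dots, \text{higher})$ or similar, reflecting that the tangent line, osculating plane, and osculating hyperplane have contact orders $2, 3, 6$. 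Then I would intersect with the quadric $Q$. When $Q$ is smooth ($\cong \mathbb{P}^1 \times \mathbb{P}^1$), the two rulings through $\phi(p)$ give lines on $Q$, and $X$, having class $(3,3)$ on $Q$, meets each ruling line in $3$ points; the local contact of $X$ with these lines and with $Q$ itself yields the gap sequence $\{1,2,3,7\}$ (non-gaps starting $4,5,6,8,\dots$, i.e. semigroup $\langle 4,5,6 \rangle$ in the smooth case). When $Q$ is a cone with vertex $v$, one instead considers whether $\phi(p)$ is the vertex or a smooth point of $Q$, the line of the single ruling through $\phi(p)$, and the cubic $C$; the degeneration of the ruling structure shifts the sequence to $\{1,2,4,7\}$.

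**The hard part will be** carrying out the local contact-order bookkeeping cleanly, i.e. showing rigorously that the gaps are exactly $\{1,2,3,7\}$ respectively $\{1,2,4,7\}$ and not some other sequence with the right cardinality. Concretely, one must rule out intermediate possibilities: for a genus $4$ non-hyperelliptic Weierstrass point the gap sequence has weight constraints (the sum of $(g_i - i)$ is bounded), and the candidates compatible with $6p \sim K_X$ (which forces $7 \notin \Gamma_p$, hence $7 \in \mathfrak{G}_p$, while $6 \in \Gamma_p$) are limited; I would enumerate them and eliminate all but the two listed using the quadric. The cleanest route is probably to directly compute $h^0(X, np)$ for small $n$ via the geometric Riemann–Roch / the canonical embedding: $h^0(np) - 1$ equals the dimension of the span of $np$ as a subscheme of $\mathbb{P}^3$ subtracted appropriately, and the position of the length-$n$ subscheme $np$ relative to the quadric $Q$ is exactly what distinguishes the two cases — a length-$4$ subscheme on a smooth quadric spans a $\mathbb{P}^3$ differently than one degenerating onto a cone. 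I would also invoke \cite[Section 4.3]{Bullock} for the detailed verification, since the statement is quoted from there, and present the above as the conceptual skeleton. Finally, to connect with Proposition \ref{gapstrata}, note that the semigroup $\langle 3, 5\rangle = \{0,3,5,6,8,9,10,\dots\}$ has gap sequence $\{1,2,4,7\}$, matching the cone case, so $\mathcal{H}^{\mathrm{even}}(6)$ should correspond to the quadric-cone locus — but that identification (that the cone case is precisely the even-spin one) is the content of the next proposition and not part of this lemma.
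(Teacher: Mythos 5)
The paper offers no proof of this lemma: it is quoted from \cite[Section 4.3]{Bullock}, so your plan of giving a ``conceptual skeleton'' and invoking Bullock for the verification mirrors the paper's own treatment. Evaluated as a proof sketch, though, your outline has a genuine gap exactly at the decisive step. The reduction you gesture at can and should be made precise: non-hyperellipticity forces $1,2\in\mathfrak{G}_p(X)$, the relation $6p\sim K_X$ forces $7\in\mathfrak{G}_p(X)$, and Riemann--Roch in the form $h^0(np)=n-3+h^0((6-n)p)$ pins down $h^0(4p)=2$, $h^0(5p)=3$, $h^0(6p)=4$, so the entire lemma reduces to the single alternative $h^0(3p)\in\{1,2\}$, with $h^0(3p)=1$ giving $\{1,2,3,7\}$ and $h^0(3p)=2$ giving $\{1,2,4,7\}$ (the candidate with $5$ as the fourth gap dies because $3+4=7$ would have to be a non-gap). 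For the first bullet there is then a short argument that your ``local contact-order bookkeeping'' never actually reaches: if $h^0(3p)=2$ then $|3p|$ is a $g^1_3$, every $g^1_3$ on a canonical genus-$4$ curve is cut out by a ruling of the unique quadric $Q\supset X$, and on a smooth $Q$ the two rulings restrict to classes $A_1\neq A_2$ with $A_1+A_2\sim K_X\sim 6p$, so $3p\sim A_i$ would force $A_1\sim A_2$, a contradiction. Hence $Q$ smooth implies $h^0(3p)=1$.

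The second bullet is where your sketched route would fail outright: being on a cone does not by itself force the gap sequence $\{1,2,4,7\}$, so no amount of ``degeneration of the ruling structure'' will prove that implication. What is true is that $\{1,2,4,7\}$ occurs if and only if $Q$ is a cone \emph{and} the ruling through $p$ meets $X$ only at $p$ (equivalently, $\mathcal{O}_X(3p)$ is the vanishing theta-null cut by the ruling). Indeed, realize the cone as $\mathbb{P}(1,1,2)$ with coordinates $x,y$ of weight $1$ and $z$ of weight $2$; a smooth member of $|\mathcal{O}(6)|$ of the form $z^3+b(x,y)z+\lambda y^6=0$ has $K_X=\mathcal{O}(2)|_X$ and the section $z$ cuts exactly $6p$ at $p=[1:0:0]$, so $p$ is subcanonical, yet the ruling $y=0$ through $p$ meets $X$ in the three distinct points $z(z^2+b(1,0))=0$ whenever $b(1,0)\neq0$; then $3p\not\sim\mathcal{O}(1)|_X$, $h^0(3p)=1$, and the gap sequence is $\{1,2,3,7\}$ although $Q$ is a cone. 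A correct write-up should therefore either add the ramification hypothesis to the cone case or state only the direction actually used later in the paper, namely that $\{1,2,4,7\}$ implies $Q$ is a cone, which is the contrapositive of the smooth-quadric argument above combined with the two-candidate reduction. Your closing observation that the even-spin locus sits inside the cone locus is right, but the containment is strict, and that is precisely what your sketch misses. Two smaller corrections: the local model of the canonical image at $p$ has three affine coordinates with contact orders given by the nonzero elements of $\{n-1:n\in\mathfrak{G}_p(X)\}$, i.e.\ $(t,t^2+\cdots,t^6+\cdots)$ or $(t,t^3+\cdots,t^6+\cdots)$, not $(t,t^3+\cdots,t^5+\cdots,\cdots)$; and $p$ can never be the vertex of the cone, since a smooth complete intersection $Q\cap C$ avoids the singular point of $Q$.
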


\begin{proof}[Proof of Proposition \ref{gapstrata}]
 Suppose $(X,6p)\in\mathbb{P}\mathcal{H}^{\operatorname{even}}(6)$. By Lemma \ref{p3}, there are only two possible Weierstrass gap sequences in $p$. However, the spin structure $\mathcal{L}=3p$ on $X$ is even and $h^0(X,\mathcal{L})$ is greater of equal than $2$. The dimension $h^0(X,\mathcal{L})$ of the space of holomorphic differentials vanishing to order at least $g-1$ at $p$ is the number of Weierstrass gap numbers $1=\gamma_1<\gamma_2<\dots<\gamma_g$ that are at least $g$. Hence, there are at least $2$ Weierstrass numbers bigger than $4$ if $\mathcal{L}=3p$ is even. In particular, the Weierstrass gap sequence of $(X,p)$ can only be $\mathfrak{G}_p(X)=\{1,2,4,7\}$ and $(X,p)$ is in $\mathcal{M}_{4,1}^{\Gamma}$, where $\Gamma$ be the semigroup generated by $3$ and $5$.\\

 Viceversa, if a pointed Riemann surface $(X,p)$ is in $\mathcal{M}_{4,1}^{\Gamma}$ there is an abelian differential on $X$ vanishing on $p$ with multiplicity $6$. Since the Weierstrass gap sequence of $X$ at $p$ is $\{1,2,4,7\}$, the Riemann surface $X$ cannot be hyperelliptic. By the above argument, the spin structure $\mathcal{L}=3p$ is necessarily even and therefore $(X,p)\in\mathbb{P}\mathcal{H}^{\operatorname{even}}(6)$.
\end{proof}

A description of the pointed Riemann surfaces in the stratum component $\mathbb{P}\mathcal{H}^{\operatorname{even}}(6)$ is available in \cite[Section 4.5]{Chen} and \cite[Section 4.3]{Bullock}. For completeness, we briefly include such a description in the present note. Remarkably, the stratum component $\mathbb{P}\mathcal{H}^{\operatorname{even}}(6)$ is an affine variety. The projective stratum components $\mathbb{P}\mathcal{H}^{\operatorname{odd}}(4)$ and $\mathbb{P}\mathcal{H}(3,1)$ are also affine.

\begin{lemma}
\label{chen}
Let $\Gamma$ be the semigroup generated by $3$ and $5$. Then, the moduli space $\mathcal{M}_{4,1}^{\Gamma}$ is an orbifold of dimension $7$.
\end{lemma}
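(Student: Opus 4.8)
\emph{Proof plan.} The plan is to identify $\mathcal{M}_{4,1}^{\Gamma}$ with the projective stratum component $\mathbb{P}\mathcal{H}^{\operatorname{even}}(6)$ and to transport the good orbifold structure of the latter across the forgetful map. By Proposition \ref{gapstrata}, a pointed surface $(X,p)$ lies in $\mathcal{M}_{4,1}^{\Gamma}$ precisely when $(X,6p)\in\mathbb{P}\mathcal{H}^{\operatorname{even}}(6)$, so the forgetful map $(X,6p)\mapsto(X,p)$ of Proposition \ref{iso} restricts to a map $\mathbb{P}\mathcal{H}^{\operatorname{even}}(6)\to\mathcal{M}_{4,1}$ whose image is exactly $\mathcal{M}_{4,1}^{\Gamma}$. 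It therefore suffices to check that $\mathbb{P}\mathcal{H}^{\operatorname{even}}(6)$ is a $7$-dimensional good orbifold and that this map is an orbifold isomorphism onto its image.

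For the dimension count, recall from Section 1 that the Teichm\"uller cover of any component of $\mathcal{H}(k_1,\dots,k_n)$ in genus $g$ has complex dimension $2g+n-1$; for $\mathcal{H}(6)$ in genus $4$ this equals $8$, hence so does its connected component $\mathcal{H}^{\operatorname{even}}(6)$. Since $\mathbb{C}^*$ acts freely and properly, $\mathbb{P}(\mathcal{T}\mathcal{H}^{\operatorname{even}}(6))$ is a smooth manifold of dimension $7$, and $\mathbb{P}\mathcal{H}^{\operatorname{even}}(6)$ is its quotient by a subgroup of $\operatorname{Mod}(\Sigma_{4,1})$, so it is a good orbifold of dimension $7$. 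As $2g-1=7$ in genus $4$, the hypothesis of Proposition \ref{iso} is met once we know the image of the forgetful map is $7$-dimensional. This holds because the forgetful map is injective: a point of $\mathbb{P}\mathcal{H}^{\operatorname{even}}(6)$ is a pair $(X,6p)$, and an isomorphism of pointed surfaces $(X_1,p_1)\cong(X_2,p_2)$ automatically carries $6p_1$ to $6p_2$; thus each fibre is a single point. An injective continuous map from a $7$-dimensional orbifold into the $10$-dimensional orbifold $\mathcal{M}_{4,1}$ has $7$-dimensional image by invariance of domain, so Proposition \ref{iso} applies and produces an orbifold isomorphism $\mathbb{P}\mathcal{H}^{\operatorname{even}}(6)\cong\mathcal{M}_{4,1}^{\Gamma}$. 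Consequently $\mathcal{M}_{4,1}^{\Gamma}$ is a good orbifold of dimension $7$.

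The only delicate point is the last one, namely ruling out that the forgetful map collapses dimension before invoking Proposition \ref{iso}; this is handled by the injectivity just noted together with invariance of domain, the same ingredient already used in the proof of Proposition \ref{iso}. Alternatively, one can obtain $\dim\mathcal{M}_{4,1}^{\Gamma}=7$ by a parameter count based on the explicit description of these surfaces, recalled in what follows, as complete intersections of a cubic surface and a quadric cone in $\mathbb{P}^3$; but the argument above, using only Propositions \ref{gapstrata} and \ref{iso}, is shorter and is the one we follow.
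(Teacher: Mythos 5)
There is a genuine gap, and it makes your argument essentially circular. Proposition \ref{iso} asserts that the forgetful map is an orbifold isomorphism onto its image \emph{provided the image has dimension $2g-1$}; its proof rests on the fact that a continuous bijection between boundaryless manifolds \emph{of the same dimension} is a homeomorphism by invariance of domain. So to invoke Proposition \ref{iso} you must already know that the image, which by Proposition \ref{gapstrata} is exactly $\mathcal{M}_{4,1}^{\Gamma}$, is an orbifold (locally a manifold) of dimension $7$ --- and that is precisely the content of Lemma \ref{chen}. This is why the paper lists Lemma \ref{chen} as one of the three inputs to Corollary \ref{isofinal} and proves it by an independent construction. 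Your substitute verification of the hypothesis does not close this circle: injectivity of a continuous map from a $7$-dimensional orbifold into the $10$-dimensional $\mathcal{M}_{4,1}$ only shows that the image has \emph{covering dimension} $7$ (and even that is via the countable sum theorem, not invariance of domain, which says nothing about injections into a higher-dimensional target). It does not show that the image is locally Euclidean: an injective continuous map between manifolds of different dimensions can have an image that is nowhere a submanifold (an injectively immersed line accumulating on itself already fails). Without knowing $\mathcal{M}_{4,1}^{\Gamma}$ is a manifold of dimension $7$, the continuous bijection onto it need not be a homeomorphism, and Proposition \ref{iso} does not apply.

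The argument you relegate to an aside --- the explicit parameter count --- is the one that is actually needed. The paper realizes a surface in $\mathcal{M}_{4,1}^{\Gamma}$ as the intersection of an irreducible quadric cone $Q$ and a cubic in $\mathbb{P}^3$ (via Lemma \ref{p3}), with a ruling of $Q$ meeting $X$ at $p$ with multiplicity $3$ and two further tangent rulings; after normalizing coordinates and imposing the tangency conditions one is left with $8$ free complex parameters, the residual symmetry group is $\mathbb{C}^*$, and the singular locus is a hypersurface. Hence $\mathcal{M}_{4,1}^{\Gamma}$ is covered by an open subset of $\mathbb{C}^8\setminus\{0\}$ modulo a $\mathbb{C}^*$-action, which is what exhibits it as a $7$-dimensional orbifold. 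Your dimension count for $\mathbb{P}\mathcal{H}^{\operatorname{even}}(6)$ itself is correct and consistent with this, but it cannot replace the construction on the $\mathcal{M}_{4,1}^{\Gamma}$ side.
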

\begin{proof}[Sketch of the proof]
    Let $(X,p)$ be a non-hyperelliptic pointed Riemann surface of genus $4$ and suppose that $(X,p)\in\mathcal{M}^{\Gamma}_{4,1}$. By Lemma \ref{p3}, we can find an irreducible quadric cone $Q$ and an irreducible cubic $C$ in $\mathbb{P}^3$ such that $X$ is the complete intersection of $Q$ and $C$. Since $7$ is a gap number for $X$ in $p$, the curve $X$ is cut out by a ruling $l_1$ of $Q$ in $p$ with multiplicity $3$. There are also two rulings tangent to $X$ in points $q_1$ and $q_2$ different from the singular point of $Q$.\\

    \begin{figure}[h]
        \centering
        \includegraphics[scale=0.15]{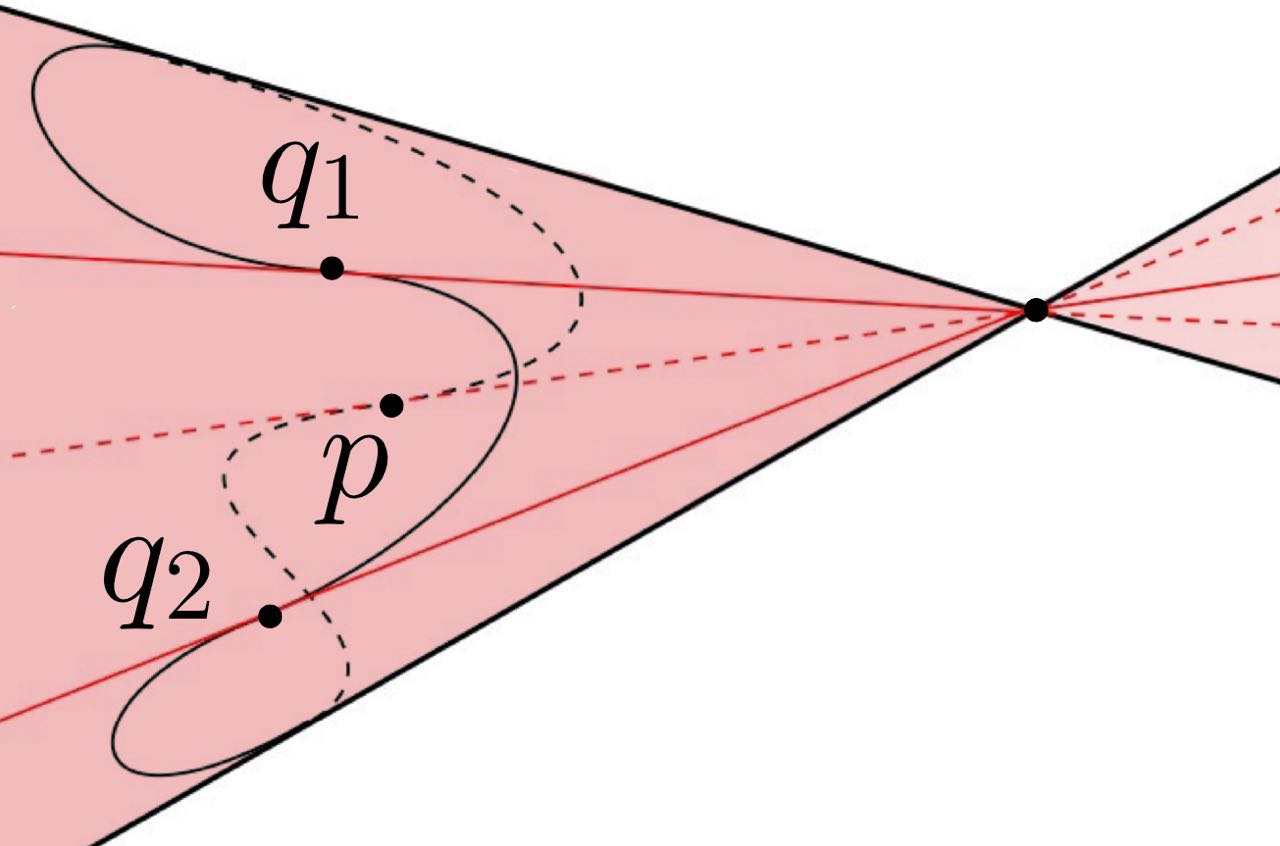}
        \caption{The cubic $C$ on the cone $Q$ with the tangent points $p,q_1,q_2$.}
    \end{figure}

    After a suitable change of coordinates, the tuple $(X,p,q_1,q_2)$ is determined solely by the cubic equation that cuts out $X$ from $Q$. After imposing the tangency requirements of the rulings, we end up with $8$ free non-trivial complex parameters, where any $\lambda\in\mathbb{C}^8\setminus\{0\}$ represents a tuple $(X,p,q_1,q_2)$. However, any two cubic equations define the same isomorphic type of variety up to the action of a matrix in $\operatorname{GL}_4(\mathbb{C})$. The subgroup of $\operatorname{GL}_4(\mathbb{C})$ preserving $Q$ and the three rulings is then isomorphic to $\mathbb{C}^*$. The locus in $\mathbb{C}^8\setminus\{0\}$ parametrizing singular curves is a hypersurface and the moduli space $\mathcal{M}^{\Gamma}_{4,1}$ is covered by its complement in $\mathbb{C}^8\setminus\{0\}$ by the action of $\mathbb{C}^*$.
\end{proof}

The following is a consequence of Proposition \ref{iso}, Proposition \ref{gapstrata} and Lemma \ref{chen}.

\begin{corollary}
\label{isofinal}
Let $\Gamma$ be the semigroup generated by $3$ and $5$. The orbifolds $\mathbb{P}\mathcal{H}^{\operatorname{even}}(6)$ and $\mathcal{M}_{4,1}^{\Gamma}$ are isomorphic. 
\end{corollary}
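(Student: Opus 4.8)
The plan is to combine the three results that have already been established in this section into a single chain of orbifold isomorphisms. Proposition \ref{gapstrata} identifies the set-theoretic content: a pointed Riemann surface $(X,p)$ lies in $\mathcal{M}_{4,1}^{\Gamma}$ precisely when $(X,6p)$ lies in $\mathbb{P}\mathcal{H}^{\operatorname{even}}(6)$, where $\Gamma$ is the semigroup generated by $3$ and $5$. So the forgetful map $(X,6p)\mapsto(X,p)$ of Proposition \ref{iso} has image exactly $\mathcal{M}_{4,1}^{\Gamma}$, and it remains only to check the dimension hypothesis in Proposition \ref{iso}, namely that the image has dimension $2g-1 = 7$. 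This is exactly the content of Lemma \ref{chen}, which computes $\dim\mathcal{M}_{4,1}^{\Gamma} = 7$.

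First I would invoke Proposition \ref{gapstrata} to conclude that the forgetful map $F:\mathbb{P}\mathcal{H}^{\operatorname{even}}(6)\rightarrow\mathcal{M}_{4,1}$, $(X,6p)\mapsto(X,p)$, is well-defined with image contained in $\mathcal{M}_{4,1}^{\Gamma}$, and conversely that every point of $\mathcal{M}_{4,1}^{\Gamma}$ arises this way (the differential vanishing to order $6$ at $p$ being unique up to scale, so the preimage is a single point of the projectivized stratum). Then I would note that $\mathcal{M}_{4,1}^{\Gamma}$ is by definition a locally closed suborbifold of $\mathcal{M}_{4,1}$, of dimension $7$ by Lemma \ref{chen}, so the hypothesis of Proposition \ref{iso} is satisfied with $g = 4$. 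Proposition \ref{iso} then yields that $F$ is an orbifold isomorphism onto its image, and since that image is $\mathcal{M}_{4,1}^{\Gamma}$, the two orbifolds are isomorphic.

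The main subtlety — rather than a genuine obstacle — is making sure the two notions of ``image'' line up: Proposition \ref{iso} speaks of an isomorphism onto the image of $F$ inside $\mathcal{M}_{4,1}$, while the statement of the corollary names this image as $\mathcal{M}_{4,1}^{\Gamma}$. This identification is precisely the two directions of the biconditional in Proposition \ref{gapstrata}, so I would spell out that the image of $F$ equals $\mathcal{M}_{4,1}^{\Gamma}$ as subsets of $\mathcal{M}_{4,1}$, invoking the forward direction for containment and the reverse direction for surjectivity onto $\mathcal{M}_{4,1}^{\Gamma}$. I would also remark that $\mathcal{M}_{4,1}^{\Gamma}$ is connected, consistent with $\mathbb{P}\mathcal{H}^{\operatorname{even}}(6)$ being a single stratum component, which is implicit in Lemma \ref{chen} exhibiting it as the quotient of a connected open subset of $\mathbb{C}^8\setminus\{0\}$ by $\mathbb{C}^*$. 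With these points noted, the proof is a short assembly of the preceding results rather than a new argument.
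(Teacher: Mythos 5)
Your proof is correct and matches the paper's approach exactly: the paper states Corollary \ref{isofinal} as an immediate consequence of Proposition \ref{iso}, Proposition \ref{gapstrata} and Lemma \ref{chen}, which is precisely the assembly you carry out. You simply make explicit the bookkeeping (that the image of the forgetful map is $\mathcal{M}_{4,1}^{\Gamma}$ and has dimension $2g-1=7$) that the paper leaves to the reader.
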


\section{Versal deformation spaces of plane curve singularities}

In this section, we revise some results on the moduli spaces $\mathcal{M}_{g,1}^{\Gamma}$ for $\Gamma$ semigroup in $\mathbb{N}$ due to Pinkham \cite{Pinkham}. In general, it is even hard to establish whether these moduli spaces are empty or not. On the other hand, we do have some results in low genera; see, for example, \cite{Nakano}.\\

Suppose $\Gamma$ is the semigroup in $\mathbb{N}$ with $\{a_1,\dots,a_k\}$ as a minimal generating set and consider the monomial curve $C_{\Gamma}=\{(t^{a_1},\dots,t^{a_k})\in\mathbb{C}^k\mid t\in\mathbb{C}\}$. Every monomial curve $C_\Gamma$ has an isolated singularity at the origin and the $1$-dimensional algebraic torus $\mathbb{C}^*$ acts naturally on the parameter $t\in\mathbb{C}$ of $C_\Gamma$. Pinkham proved that the moduli space $\mathcal{M}_{g,1}^{\Gamma}$ is a quotient of the \textit{versal deformation spaces} of the monomial curve $C^\Gamma$ \cite[Proposition 13.9]{Pinkham}. In what follows, we will recall the definition of the versal deformation of the monomial curve $C_{\Gamma}$ in the particular case $C_{\Gamma}$ is the zero level set of a germ of the complex analytic map $f:\mathbb{C}^2\rightarrow\mathbb{C}$ with an isolated singularity at the origin. In particular, we will focus on the case $f$ arises from an irreducible root system $R$. For more details see, for example, \cite[Section 2]{Cuadrado2021} or \cite[Chapter II, Section 1.3]{Greuel07}.\\

A versal deformation of $C_{\Gamma}$ is a morphism of complex analytic varieties. Roughly speaking, the preimage of a fixed base point in the target is isomorphic to the monomial curve $C_{\Gamma}$, while the dimensions of the fibers are locally preserved. More precisely, consider the algebra $\mathbb{C}\{x,y\}$ of convergent power series in two complex variables. A classical result states that the algebra $\mathbb{C}\{x,y\}$ quotient by the ideal $(f_x,f_y)$ generated by the partial derivatives of $f$ has a finite dimension if $f$ is a plane curve with an isolated singularity in the origin. Suppose now that $C_{\Gamma}$ is defined by the germ $f_\Gamma$. Then, there are polynomials $g_1,\dots,g_m\in\mathbb{C}[x,y]$ projecting to generators of $\mathbb{C}\{x,y\}/(f_x, f_y)$. Consider the perturbation of $f_\Gamma$ $$F_\Gamma(x,y,s)=f_\Gamma(x,y)+\sum_{i=1}^m s_ig_i(x,y)$$ given by the parameters $s=(s_1,\dots,s_m)\in\mathbb{C}^m$ and the monomials $g_1,\dots,g_m\in\mathbb{C}[x,y]$. In the affine coordinates $(x,y,s_1,\dots,s_m)$ on $\mathbb{C}^2\times\mathbb{C}^m$, the projection $\mathbb{C}^2\times\mathbb{C}^m\rightarrow\mathbb{C}^m$ can be restricted to the vanishing locus of the polynomial $F_\Gamma$. The map $\pi_\Gamma:\mathbb{V}(F_\Gamma)\rightarrow\mathbb{C}^m$ is the versal deformation of $C_\Gamma$.  Note that the fiber at the origin coincides with $C_\Gamma$.\\

The set $U_\Gamma$ of $s\in\mathbb{C}^m$ such that the fiber $\pi_\Gamma^{-1}(s)$ is smooth is the \textit{versal deformation space} of $C_\Gamma$.
The smooth fibers of $\pi_\Gamma$ can projectivized each fiber by adding a point at infinity. More precisely, we can homogenize the polynomials $F_\Gamma(s,\cdot,\cdot)$ in the variables $(x,y)$ for every $s\in\mathbb{C}^{m}$ and denote the associated projective variety by $\mathbb{V}(F_{\Gamma,s})$ The following is Pinkham's result. 

\begin{theorem}
\label{pinkham}
     If $U_\Gamma$ is not empty, the $\mathbb{C}^*$ action on $C_\Gamma$ can be extended to $U_\Gamma$, in such a way that $\pi_\Gamma$ is $\mathbb{C}^*$-equivariant and $U_\Gamma/\mathbb{C}^*$ is isomorphic to $M^\Gamma_{g,1}$. The isomorphism is given by $U_\Gamma\ni s\mapsto\mathbb{V}(F_{\Gamma,s})\in M^\Gamma_{g,1}$, where the marked point of the Riemann surface $\mathbb{V}(F_{\Gamma,s})$ is the added point at infinity.
\end{theorem}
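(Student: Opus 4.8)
The plan is to put a grading on everything and then run a versality argument. Normalise the $\mathbb{C}^*$-action on the monomial curve so that the parameter $t$ of $C_\Gamma$ has weight $1$; in the plane-curve case $\Gamma=\langle a,b\rangle$ this forces $\deg x=a$, $\deg y=b$ and makes $f_\Gamma=x^{b}-y^{a}$ quasi-homogeneous of weighted degree $ab$. Take the $g_i$ to be monomials $x^{p}y^{q}$ of weighted degrees $d_i$; for the simple singularities relevant here — in particular $E_8$, the germ $x^{3}-y^{5}$ attached to $\Gamma=\langle 3,5\rangle$, where the resulting weights are $1,4,6,7,9,10,12,15$ — one checks directly that every $d_i<ab$. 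Assigning to $s_i$ the weight $ab-d_i>0$ makes $F_\Gamma(x,y,s)$ quasi-homogeneous of weighted degree $ab$, so the $\mathbb{C}^*$-action on $\mathbb{C}^2\times\mathbb{C}^m$ with these weights preserves $\mathbb{V}(F_\Gamma)$ and $\pi_\Gamma$ is equivariant for the corresponding diagonal action on the base $\mathbb{C}^m$. The discriminant $\mathbb{C}^m\setminus U_\Gamma$ is the image under $\pi_\Gamma$ of the critical locus, hence $\mathbb{C}^*$-invariant, so the action restricts to $U_\Gamma$: this is the first half of the statement. Since all the weights are strictly positive, the origin is the only fixed point and lies outside $U_\Gamma$, so the action on $U_\Gamma$ has finite stabilisers and $U_\Gamma/\mathbb{C}^*$ is an orbifold of dimension $m-1$, much as for a weighted projective space.

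Next I would build the map $\Phi\colon U_\Gamma\to\mathcal{M}^{\Gamma}_{g,1}$ of the statement and check it descends. For $s\in U_\Gamma$, homogenise $F_\Gamma(\,\cdot\,,\,\cdot\,,s)$ suitably (in a weighted projective plane, so that the equation stays quasi-homogeneous) and take the closure; this adds a single point $\infty_s$, and near $\infty_s$ the equation is a positive-weight perturbation of the one for $\overline{C_\Gamma}$, which is smooth there, so $\mathbb{V}(F_{\Gamma,s})$ is a smooth projective curve. Its genus equals the arithmetic genus of $\overline{C_\Gamma}$, i.e.\ the $\delta$-invariant $\#(\mathbb{N}\setminus\Gamma)=g$ of the unibranch singularity, because smoothing a unibranch planar singularity leaves the arithmetic genus unchanged. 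The coordinates $x,y$ restrict to rational functions on $\mathbb{V}(F_{\Gamma,s})$ regular away from $\infty_s$, with poles there of order exactly $a,b$; together with the ring structure this shows the Weierstrass non-gap semigroup at $\infty_s$ contains $\Gamma$, and counting against $g=\#(\mathbb{N}\setminus\Gamma)$ forces equality, so $\Phi(s)=(\mathbb{V}(F_{\Gamma,s}),\infty_s)\in\mathcal{M}^{\Gamma}_{g,1}$. Finally, replacing $s$ by $\lambda\cdot s$ changes $F_\Gamma(\,\cdot\,,\,\cdot\,,s)$ by the substitution $(x,y)\mapsto(\lambda^{a}x,\lambda^{b}y)$, an automorphism of the ambient weighted projective plane fixing $\infty_s$; hence $\Phi$ is $\mathbb{C}^*$-invariant and descends to $\bar\Phi\colon U_\Gamma/\mathbb{C}^*\to\mathcal{M}^{\Gamma}_{g,1}$.

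It then remains to show $\bar\Phi$ is an isomorphism of orbifolds. For injectivity, an isomorphism of pointed curves $(\mathbb{V}(F_{\Gamma,s}),\infty_s)\cong(\mathbb{V}(F_{\Gamma,s'}),\infty_{s'})$ must preserve the filtration of the structure sheaf by pole order at the marked point, whose associated graded ring is the semigroup ring of $\Gamma$; hence it is induced by a weight-preserving linear change of the coordinates $x,y$, i.e.\ by an element of the very $\mathbb{C}^*$ we quotiented out, so $s'\in\mathbb{C}^*\!\cdot s$. The same computation identifies the stabiliser of $[s]$ with $\operatorname{Aut}(\mathbb{V}(F_{\Gamma,s}),\infty_s)$, matching the orbifold structures. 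For surjectivity, take $(X,p)\in\mathcal{M}^{\Gamma}_{g,1}$; the semigroup condition yields functions $u,v$ regular on $X\setminus\{p\}$ with pole orders exactly $a,b$ at $p$, and since $\Gamma=\langle a,b\rangle$ the pair $(u,v)$ embeds $X\setminus\{p\}$ in $\mathbb{C}^2$ as a smooth affine curve whose top-weighted-degree part is $x^{b}-y^{a}$. Thus $X\setminus\{p\}$ is a smoothing of $C_\Gamma$ compatible with the weight filtration, and versality of $\pi_\Gamma$ shows this smoothing is $\pi_\Gamma^{-1}(s)$ for some $s\in U_\Gamma$, with $\Phi(s)=(X,p)$. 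With $\bar\Phi$ a bijective holomorphic map between orbifolds of the same dimension, invariance of domain — exactly as used in the proof of Proposition~\ref{iso} — finishes the argument.

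The step I expect to be the real obstacle is the surjectivity argument, and specifically the passage from local to global versality: one must know that the miniversal deformation of the quasi-homogeneous unibranch plane-curve singularity cut out by $f_\Gamma$ is literally the explicit family $F_\Gamma$, and that, because every deformation direction has strictly positive weight, the versal family already parametrises \emph{all} smooth curves with Weierstrass semigroup $\Gamma$, not merely those in an analytic neighbourhood of $C_\Gamma$ in deformation space. This positivity — verified above in the $E_8$ case $\Gamma=\langle 3,5\rangle$ that is needed for the main theorem — is precisely where the hypotheses are used, and it is the heart of Pinkham's argument.
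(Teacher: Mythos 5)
The paper does not prove this statement at all: it is imported verbatim from Pinkham (\cite[Proposition 13.9]{Pinkham}), so there is no in-paper argument to compare yours against. Judged on its own, your sketch is a faithful reconstruction of Pinkham's actual strategy --- grade everything by the $\mathbb{C}^*$-action, compactify each fibre in a weighted projective plane by a single smooth point at infinity, identify the genus with $\#(\mathbb{N}\setminus\Gamma)$ via the $\delta$-invariant, and use positivity of the deformation weights to globalise versality --- and your weight computation $1,4,6,7,9,10,12,15$ for $\Gamma=\langle 3,5\rangle$ is correct (these are half the degrees $2,8,\dots,30$ of the basic $W_{E_8}$-invariants, consistent with the paper's later discussion of $\tau_R$). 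You also correctly flag the one point where the statement is sensitive: as written for arbitrary $\Gamma$ the theorem is false in general, since Pinkham's isomorphism is with the quotient of the \emph{positive-weight} (in your convention) part of the smoothing locus; it coincides with all of $U_\Gamma$ exactly when every monomial in the Milnor-algebra basis has weighted degree below that of $f_\Gamma$, which you verify for the $E_8$ case the paper needs.

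Two steps of your sketch are thinner than they should be. First, injectivity: a pointed isomorphism $(X,\infty_s)\cong(X',\infty_{s'})$ preserves the pole-order \emph{filtration}, not the grading, so the induced substitution is of the form $u\mapsto\lambda^a u+c$, $v\mapsto\lambda^b v+\gamma u+d$ rather than a pure scaling; you must argue that the lower-order terms are absorbed by the normal form of the miniversal family (the basis $g_i$ contains no monomials from the Jacobian ideal, and completing powers eliminates $c,\gamma,d$), after which only the $\mathbb{C}^*$ survives. Second, surjectivity: versality is a statement about germs of deformations over germs of base spaces, so ``versality of $\pi_\Gamma$ shows this smoothing is $\pi_\Gamma^{-1}(s)$'' does not follow directly; one either runs Pinkham's sweeping argument with the $\mathbb{C}^*$-action (which is where positivity of the weights is genuinely used), or, in the two-generator case $\Gamma=\langle a,b\rangle$, writes the single relation $P(u,v)=0$ of weighted degree $ab$ explicitly and reduces it to the normal form $f_\Gamma+\sum s_ig_i$ by the filtered coordinate changes above. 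You correctly identify this as the heart of the matter, but as written it is asserted rather than proved.
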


Let $R$ be one of the irreducible root systems of type $A_n,D_n,E_6,E_7$ or $E_8$ for $n\in\mathbb{N}_{\geq 3}$. Each root system comes with a germ of a complex analytic map $f_R$, as in the table below. Suppose that, up to a change of coordinates\footnote{Up to a change of coordinate in $\mathbb{C}^2$, versal deformation spaces are homeomorphic.}, the monomial curve $C_\Gamma$ is defined by  $f_R$ for some root system $R$. For simplicity, we will denote the versal deformation space of $f_R$ by $U_R$.

\vspace{5mm}

\begin{center}
\begin{tabular}{|c|c|}
\hline
     Root system $R$  & Germ $f_R$ \\
     \hline
$A_n$ & $x^2+y^{n+2}$\\ 
\hline
$D_n$ & $y(x^2+y^{n-2})$\\ 
\hline
$E_6$ & $x^3+y^4$\\
\hline
$E_7$ & $x(x^2+y^3)$\\
\hline
$E_8$ & $x^3+y^5$\\
\hline
\end{tabular}
\end{center}

\vspace{5mm}

The following is a Theorem of Arnol'd \cite[Propositions 9.1-9.3]{arnold}. 

\begin{theorem}
\label{arnold}
    Let $R$ be one of the irreducible root systems of type $A_n,D_n,E_6,E_7$ or $E_8$ for $n\in\mathbb{N}_{\geq 3}$. Consider the complement $V_R$ of the real hyperplane arrangement associated with $R$. The versal deformation $U_R$ is homeomorphic to the complexification of $V_R$ modulo the action of the Coxeter reflection group $W_R$. In particular, the versal deformation space $U_R$ is an Eilenberg-MacLane space $K(\pi,1)$ for the Artin group $A_R$.
\end{theorem}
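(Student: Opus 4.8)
The plan is to deduce the statement from two classical inputs: Brieskorn's identification of the discriminant of a simple singularity with the discriminant of the corresponding Weyl group, and Deligne's theorem on the asphericity of the complement of a complexified real reflection arrangement.

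First I would record two pieces of bookkeeping. For each type $R$ in the table the Milnor number $\mu(f_R)$ equals the rank of $R$, and by Chevalley's theorem the ring of $W_R$-invariant polynomials on the Cartan subalgebra $\mathfrak{h}_R$ is free, so $\mathfrak{h}_R/W_R\cong\mathbb{C}^{\mu(f_R)}$. The monomials $g_1,\dots,g_m$ used to build $F_R$ project to a $\mathbb{C}$-basis of the Milnor algebra $\mathbb{C}\{x,y\}/(f_{R,x},f_{R,y})$---which coincides with the Tjurina algebra since $f_R$ is quasihomogeneous---so $m=\mu(f_R)$ and the base of $\pi_R$ is $\mathbb{C}^{\mu(f_R)}$. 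The key input is then Brieskorn's theorem: for a simple singularity there is a biholomorphism from the base of the miniversal deformation onto $\mathfrak{h}_R/W_R$ carrying the discriminant, i.e., the locus of parameters with singular fiber, onto the image of $\bigcup_{\alpha\in R}H_\alpha$. Brieskorn establishes this for the surface singularity $f_R(x,y)+z^2$, but the germ of a miniversal unfolding discriminant is a stable invariant: by Thom--Sebastiani, adjoining a nondegenerate quadratic form in fresh variables changes neither the base nor the discriminant, so the statement descends to the plane-curve germ $f_R\colon\mathbb{C}^2\to\mathbb{C}$. This is the content of Arnol'd's Propositions 9.1--9.3.

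Granting this, the versal deformation space $U_R$, being the complement of the discriminant in the base, satisfies
\[
U_R\;\cong\;\Bigl(\mathfrak{h}_R\setminus\bigcup_{\alpha\in R}H_\alpha\Bigr)\Big/W_R.
\]
By definition $\mathfrak{h}_R\setminus\bigcup_{\alpha}H_\alpha$ is the complexification of the real complement $V_R$, which gives the first assertion. For the second, Deligne's theorem says that the quotient by a finite real reflection group of the complement of its complexified reflection arrangement is aspherical; applied to $W_R$ it shows $U_R$ is a $K(\pi,1)$. Finally, by Brieskorn's computation (Fox--Neuwirth in type $A$) the fundamental group of this quotient is the Artin group $A_R$ of type $R$, with meridians of the images of the reflection hyperplanes as a generating set; this identifies $\pi_1(U_R)$ with $A_R$ and finishes the proof.

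The step that requires genuine care is Brieskorn's identification of the unfolding discriminant with the Weyl discriminant, together with the stable-equivalence reduction from the surface singularity to the plane curve at hand; once that is in place, asphericity and the computation of $\pi_1$ are direct appeals to Deligne's and Brieskorn's theorems on reflection-arrangement complements.
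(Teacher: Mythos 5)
The paper offers no proof of this statement---it is quoted as a theorem of Arnol'd with a citation to \cite[Propositions 9.1--9.3]{arnold}---and your proposal correctly reconstructs exactly the standard argument that citation encapsulates: Brieskorn's identification of the unfolding base and discriminant with $\mathfrak{h}_R/W_R$ and the Weyl discriminant (descended from the surface singularity to the plane curve by stable equivalence), followed by Deligne's asphericity theorem and Brieskorn's computation of $\pi_1$ as the Artin group. The ingredients you use are precisely those the paper itself invokes in the surrounding discussion (the Chevalley basis $f_1,\dots,f_8$ of $W_R$-invariants defining $\tau_R$, Deligne's $K(\pi,1)$ theorem, and \cite{Brieskorn}), so there is nothing to flag.
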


\textit{Artin groups} are finitely presented groups. Given a finite tree\footnote{Artin group are generally defined from labelled graphs. Here, we will only consider \textit{small-type} Artin groups, and we will not need any labels on the defining graphs.} $\Gamma$, Artin groups have generators defined from the set of vertices $\mathcal{V}(\Gamma)=\{v_1,\dots,v_n\}$, while the relations come from the edges, as follows
\begin{align*}
    A_\Gamma = \Bigg\langle a_1, \dots, a_n \in \mathcal{V}(\Gamma) \text{ }\Bigg\vert
    \text{ }
    \begin{array}{@{}l@{}l@{}l@{}}
         a_i a_j a_i &= a_j a_i a_j &\quad \text{if } v_i \text{ and } v_j \text{ are adjacent}\\
         a_i a_j &= a_j a_i &\quad \text{otherwise}
    \end{array}
    \Bigg\rangle. 
\end{align*} 

Any Artin group surjects in a Coxeter group by imposing every standard generator to be an involution. If the associated Coxete group is finite, then the Artin group is called of \textit{finite-type}, and in these cases, the complexified $V_R$ modulo the action of $W_R$ is $K(\pi,1)$ by a theorem of Deligne \cite{Deligne1972}. Every Artin group $A_R$ of Theorem \ref{arnold} is finite-type.\\

The isomorphism of Theorem \ref{arnold} is given by a basis of polynomials generating the algebra of polynomials that are invariant under the action of $W_R$. If $R=E_8$, the basis of the algebra consists of homogeneous polynomials $f_1,\dots,f_8\in\mathbb{C}[x_1,\dots,x_8]$ of even degree. In particular, the map \begin{align*}
    \tau_R:\mathbb{C}^8&\rightarrow\mathbb{C}^8\\
    x&\mapsto(f_1(x),\dots,f_8(x))
\end{align*} induces an homeomorphism between $\mathbb{C}^8/W_{E_8}$ and $\mathbb{C}^8$ such that the complexification of $V_R$ modulo the action of Coxeter group $W_R$ is mapped homeomorphically to $U_R$.\\

In case $\Gamma$ is generated by $\{3,5\}$, the monomial curve $C_\Gamma$ is, up to change of coordinates, the vanishing locus of $f_R=x^3+y^5$, where $R$ is the root system $E_8$. The following is a consequence of Theorem \ref{arnold} and Corollary \ref{isofinal}.

\begin{theorem}
    The stratum component $\mathcal{H}^{\operatorname{even}}(6)$ is an $K(\pi,1)$ orbifold classifying space.
\end{theorem}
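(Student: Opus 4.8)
The plan is to assemble the final statement from the structural results already in place. The key point is that $\mathcal{H}^{\operatorname{even}}(6)$ and its projectivization $\mathbb{P}\mathcal{H}^{\operatorname{even}}(6)$ are good orbifolds, so ``$K(\pi,1)$ orbifold classifying space'' means that the universal orbifold cover of $\mathbb{P}\mathcal{H}^{\operatorname{even}}(6)$ (and the one obtained by further removing the $\mathbb{C}^*$-fiber from the non-projective stratum) is contractible. First I would invoke Corollary \ref{isofinal}, which identifies $\mathbb{P}\mathcal{H}^{\operatorname{even}}(6)$ with $\mathcal{M}_{4,1}^{\Gamma}$ for $\Gamma = \langle 3,5\rangle$ as orbifolds. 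Then, by Theorem \ref{pinkham} (Pinkham), $\mathcal{M}_{4,1}^{\Gamma}$ is the quotient $U_\Gamma/\mathbb{C}^*$, where $U_\Gamma$ is the versal deformation space of the monomial curve $C_\Gamma$; since $\Gamma = \langle 3,5\rangle$, the curve $C_\Gamma$ is (up to change of coordinates) cut out by $f_{E_8} = x^3+y^5$, so $U_\Gamma = U_{E_8}$. By Theorem \ref{arnold} (Arnol'd, together with Deligne for the $K(\pi,1)$ property of finite-type Artin groups), $U_{E_8}$ is homeomorphic to the complement of the complexified $E_8$ hyperplane arrangement modulo $W_{E_8}$, hence is a $K(A_{E_8},1)$ — in particular aspherical with contractible universal cover.

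Next I would pass from $U_{E_8}$ to the actual orbifold. The space $U_{E_8}$ is a genuine manifold (an open subset of $\mathbb{C}^8$), and $\mathcal{M}_{4,1}^{\Gamma} = U_{E_8}/\mathbb{C}^*$ is the quotient by the free and proper $\mathbb{C}^*$-action of Theorem \ref{pinkham}; so as an orbifold $\mathcal{M}_{4,1}^{\Gamma}$ is uniformized by the smooth manifold $U_{E_8}/\mathbb{C}^*$, on which the relevant subgroup of $\operatorname{Mod}(\Sigma_{4,1})$ acts (matching the discussion of $\mathbb{P}(\mathcal{TC})$ in Section 1). Because $U_{E_8}$ is a $\mathbb{C}^*$-bundle over $U_{E_8}/\mathbb{C}^*$ with contractible total space and contractible fiber $\mathbb{C}^*$ being a $K(\mathbb{Z},1)$, the long exact sequence of the fibration gives that $\pi_k(U_{E_8}/\mathbb{C}^*) = 0$ for $k \geq 2$, i.e. $U_{E_8}/\mathbb{C}^*$ is itself aspherical with $\pi_1 \cong A_{E_8}/Z$ for the appropriate central $\mathbb{Z}$. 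Thus $\mathbb{P}\mathcal{H}^{\operatorname{even}}(6)$ is a $K(\pi,1)$ orbifold. Finally, for $\mathcal{H}^{\operatorname{even}}(6)$ itself, Proposition \ref{iso} and the $\mathbb{C}^*$-bundle structure $q_{\mathcal{C}}:\mathcal{TC}\to\mathbb{P}(\mathcal{TC})$ from Section 1 show that $\mathcal{TC}$ is a $\mathbb{C}^*$-bundle over $\mathbb{P}(\mathcal{TC})$; since $\mathbb{P}(\mathcal{TC})$ is the orbifold universal cover of $\mathbb{P}\mathcal{H}^{\operatorname{even}}(6)$ and is therefore contractible, and $\mathbb{C}^*\simeq S^1$, the total space $\mathcal{TC}$ is a $K(\mathbb{Z},1)$; equivalently the orbifold universal cover of $\mathcal{H}^{\operatorname{even}}(6)$ (obtained after unwinding the central $\mathbb{C}^*$) is contractible, so $\mathcal{H}^{\operatorname{even}}(6)$ is a $K(\pi,1)$ orbifold. (This last step is also exactly the content of the central extension in Proposition \ref{projpi}.)

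I expect the main subtlety — though not really an obstacle given the cited results — to be the careful bookkeeping of what ``$K(\pi,1)$ orbifold'' means and the verification that the homeomorphisms of Theorem \ref{arnold} and the isomorphisms of Corollary \ref{isofinal} are compatible with the orbifold structures, i.e. that they intertwine the $\mathbb{C}^*$-actions and the mapping-class-group actions so that asphericity transports correctly. Concretely one must check: (i) Pinkham's isomorphism $U_\Gamma/\mathbb{C}^* \cong \mathcal{M}_{g,1}^\Gamma$ is an isomorphism of orbifolds, not just of coarse spaces (this is stated in Theorem \ref{pinkham}); (ii) Arnol'd's homeomorphism $U_{E_8} \cong (V_{E_8}\otimes\mathbb{C})/W_{E_8}$ identifies the $\mathbb{C}^*$-actions up to the reparametrization coming from the weights of the invariant polynomials $f_1,\dots,f_8$, so that the quotients by $\mathbb{C}^*$ still correspond; (iii) the identification $\mathbb{P}\mathcal{H}^{\operatorname{even}}(6)\cong\mathcal{M}_{4,1}^\Gamma$ from Corollary \ref{isofinal} is induced by a $\operatorname{Mod}(\Sigma_{4,1})$-equivariant homeomorphism of the uniformizing manifolds, which is precisely how it was proved. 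Granting these compatibilities, the asphericity of the manifold $U_{E_8}$ (hence of $U_{E_8}/\mathbb{C}^*$ and of its $\mathbb{C}^*$-bundle $\mathcal{TC}$) propagates to the statement that $\mathcal{H}^{\operatorname{even}}(6)$ is an orbifold $K(\pi,1)$, completing the proof.
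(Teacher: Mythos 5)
Your proposal follows essentially the same route as the paper: identify $\mathbb{P}\mathcal{H}^{\operatorname{even}}(6)$ with $U_{E_8}/\mathbb{C}^*$ via Corollary \ref{isofinal} and Theorem \ref{pinkham}, invoke Theorem \ref{arnold} for the asphericity of $U_{E_8}$, and transport asphericity down the $\mathbb{C}^*$-quotient and back up to the unprojectivized stratum. The skeleton is correct, but several intermediate assertions are false as stated and should be repaired. Neither $U_{E_8}$ nor the fiber $\mathbb{C}^*$ is contractible: $U_{E_8}$ is a $K(A_{E_8},1)$ and $\mathbb{C}^*$ is a $K(\mathbb{Z},1)$, and what the long exact sequence of the fibration actually requires is that $\pi_1(\mathbb{C}^*)\rightarrow\pi_1(U_{E_8})$ be injective (which holds, since the orbit of a basepoint represents the Garside element, an element of infinite order); with that, $\pi_2$ of the base vanishes and $U_{E_8}/\mathbb{C}^*$ is aspherical. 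More seriously, $\mathbb{P}(\mathcal{TC})$ is not the orbifold universal cover of $\mathbb{P}\mathcal{H}^{\operatorname{even}}(6)$ and is not contractible, and $\mathcal{TC}$ is not a $K(\mathbb{Z},1)$: the paper's main theorem (the Large Kernel Property) says precisely that the Teichm\"uller stratum has a large fundamental group, so these claims contradict the rest of the paper. What you need, and what your argument delivers once corrected, is only that $\mathcal{TC}$ is aspherical, being a $\mathbb{C}^*$-bundle over the aspherical manifold $\mathbb{P}(\mathcal{TC})$ with $\pi_1$ of the fiber injecting centrally (Proposition \ref{projpi}); a good orbifold uniformized by an aspherical manifold is also uniformized by a contractible one (its universal cover), which is the notion of $K(\pi,1)$ orbifold used in the paper's own two-line proof.
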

\begin{proof}
A good orbifold is $K(\pi,1)$ if covered by a contractible manifold and $\mathcal{H}^{\operatorname{even}}(6)$ is $K(\pi,1)$ if $\mathbb{P}\mathcal{H}^{\operatorname{even}}(6)$ is. The projective stratum component $\mathbb{P}\mathcal{H}^{\operatorname{even}}(6)$ is covered by the versal deformation space $U_R$ for $R=E_8$, that is $K(\pi,1)$ manifold and therefore covered by a contractible manifold.
\end{proof}

\section{The orbifold fundamental group}

In this section, we show that the orbifold fundamental group of $\mathbb{P}\mathcal{H}^{\operatorname{even}}(6)$ is isomorphic to the inner automorphism group of the Artin group associated with the $E_8$ root system. In particular, the kernel of the monodromy is very large and contains a non-abelian free group of rank $2$. Hence, the connected components of the Teichm\"uller cover of the stratum component $\mathcal{H}^{\operatorname{even}}(6)$ have a non-trivial fundamental group.\\

Let $G$ be a topological group acting properly on a manifold $X$ and let $EG\rightarrow BG$ be the universal $G$-bundle. The \textit{Borel construction} $X_G$ is the quotient of $EG\times X$ by the diagonal action of $G$ on both factors. The orbifold fundamental group of $X/G$ is isomorphic to the fundamental group $\pi_1(X_G)$ and $X_G\rightarrow BG$ is a fiber bundle with $X$ as a fiber \cite[Chapter 2, Theorem 2.18]{Orbifolds}. Since the projective stratum component $\mathbb{P}\mathcal{H}^{\operatorname{even}}(6)$ is the quotient of the versal deformation space $U_R$, for the root system $R=E_8$, by $\mathbb{C}^*$ we get the short exact sequence
\begin{align}
\label{quot}
    1\rightarrow\pi_1(\mathbb{C}^*)\rightarrow\pi_1(U_R)\rightarrow\pi_1^{orb}(\mathbb{P}\mathcal{H}^{\operatorname{even}}(6))\rightarrow1
\end{align} 
from the fiber bundle associated with Borel construction.\\

By Theorem \ref{arnold}, the fundamental group of $U_R$ is the Artin group $A_{E_8}$ and its quotient by a cyclic normal subgroup is isomorphic to $\pi_1^{orb}(\mathbb{P}\mathcal{H}^{\operatorname{even}}(6))$. 

\begin{lemma}
\label{central}
    Every cyclic normal subgroup of $A_{E_8}$ is central. 
\end{lemma}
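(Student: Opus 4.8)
The plan is to reduce immediately to the case of an \emph{infinite} cyclic normal subgroup, and then to rule out the only obstruction, namely a normal copy of $\mathbb{Z}$ admitting an inverting element. First, $A_{E_8}$ is torsion-free, since it acts freely on the contractible universal cover of the aspherical space $U_{E_8}$; hence a finite cyclic normal subgroup is trivial, and in particular central. So let $N=\langle g\rangle\cong\mathbb{Z}$ be normal in $A_{E_8}$. Conjugation gives a homomorphism $A_{E_8}\to\operatorname{Aut}(N)\cong\mathbb{Z}/2\mathbb{Z}$ whose kernel is the centralizer $C:=C_{A_{E_8}}(g)$, of index at most $2$. If $C=A_{E_8}$ then $N$ is central and we are done; otherwise $[A_{E_8}:C]=2$ and there exists $t$ with $tgt^{-1}=g^{-1}$. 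It is this last configuration that must be excluded.

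To do so I would pass to $Q:=A_{E_8}/Z(A_{E_8})$. By Brieskorn--Saito and Deligne the center $Z(A_{E_8})$ is infinite cyclic, and by the theorem of Calvez--Wiest the quotient $Q$ of an irreducible spherical Artin group by its center is acylindrically hyperbolic; in particular $Q$ has no infinite amenable normal subgroup, and every finite normal subgroup of $Q$ lies in its finite radical $K(Q)$. The next step is to observe that $K(Q)$ is trivial: its preimage $\widetilde K$ in $A_{E_8}$ is a normal, torsion-free subgroup which is (infinite cyclic)-by-finite, hence $\widetilde K\cong\mathbb{Z}$, say $\widetilde K=\langle w\rangle$ with $w^{m}$ a generator of $Z(A_{E_8})$; normality of $\widetilde K$ forces $awa^{-1}\in\{w,w^{-1}\}$ for all $a\in A_{E_8}$, and since $w^{m}$ is central one gets $aw^{m}a^{-1}=w^{m}$, which together with torsion-freeness rules out the sign $-1$, so $w$ is central, $m=1$, $\widetilde K=Z(A_{E_8})$ and $K(Q)=1$. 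Now let $\bar N$ be the image of $N$ in $Q$: it is a cyclic normal subgroup, so if it were infinite it would be an infinite amenable normal subgroup of the acylindrically hyperbolic group $Q$, which is impossible, and if finite it lies in $K(Q)=1$. Hence $\bar N=1$, i.e. $N\subseteq Z(A_{E_8})$, so $N$ is central. (This in particular kills the index-$2$ case: once $g$ is central, $tgt^{-1}=g^{-1}$ forces $g^{2}=1$, contradicting $N\cong\mathbb{Z}$.)

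The only non-formal input is the structural fact about $Q=A_{E_8}/Z(A_{E_8})$: everything else is bookkeeping. Concretely one needs that $Q$ has no nontrivial abelian normal subgroup and trivial finite radical, and acylindrical hyperbolicity supplies this at once; alternatively one can run the argument exactly as in Looijenga--Mondello \cite{Looijenga2014} and in the author's earlier work \cite{Giannini2023}, where the analogous statement for $A_{E_6}$ and $A_{E_7}$ is proved and where the reasoning is insensitive to the Dynkin type and transfers verbatim to $E_8$. The one place requiring a little care is the triviality of $K(Q)$: it is elementary but genuinely uses that $Z(A_{E_8})$ is the \emph{whole} center of $A_{E_8}$, not merely some central infinite cyclic subgroup.
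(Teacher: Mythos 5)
Your proposal is correct, but it takes a genuinely different and much heavier route than the paper. The paper's proof is a two-line elementary argument: if $\langle a\rangle\cong\mathbb{Z}$ is normal, conjugation sends $a$ to a generator of the same subgroup, i.e.\ to $a^{\pm1}$, and applying the degree homomorphism $\deg\colon A_{E_8}\to\mathbb{Z}$ (well defined because all defining relations of $A_{E_8}$ are length-preserving) to $gag^{-1}=a^{n}$ yields $\deg(a)=n\deg(a)$, forcing $n=1$; no torsion-freeness, no knowledge of the center, and no acylindrical hyperbolicity are invoked. You instead pass to $Q=A_{E_8}/Z(A_{E_8})$, use Calvez--Wiest to make $Q$ acylindrically hyperbolic, compute that its finite radical is trivial, and conclude that the image of $N$ in $Q$ must vanish. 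Every step of this is sound: the reduction to the inverting (index-$2$) case, the fact that a torsion-free infinite virtually cyclic group is infinite cyclic, and the identification $\widetilde K=Z(A_{E_8})$ all check out; the one point to tighten is your justification of torsion-freeness, where acting freely on a contractible space is not by itself enough --- you need that the universal cover of $U_{E_8}$ is a \emph{finite-dimensional} contractible manifold, so that $A_{E_8}$ has finite cohomological dimension. What your approach buys is robustness: the paper's degree computation only excludes the inverting case when $\deg(a)\neq0$ (its parenthetical ``provided $g\neq id$'' is really a hypothesis on $\deg(a)$), whereas your argument also disposes of a hypothetical degree-zero generator, since an infinite cyclic normal subgroup of the acylindrically hyperbolic group $Q$ is forbidden outright. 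What it costs is reliance on substantial external machinery (Calvez--Wiest, Dahmani--Guirardel--Osin) where the paper gets by with the abelianization.
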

\begin{proof}
     Let $a\in A_{E_8}$ be the generator of an infinite cyclic normal subgroup. For every $g\in A_{E_8}$ there is an $n\in\mathbb{N}$ such that $gag^{-1}=a^n$ holds. Standard generators of $A_{E_8}$ share only length-preserving relations. Therefore, there exists a well-defined homomorphism $$\operatorname{deg}:A_{E_8}\rightarrow\mathbb{Z}$$ assigning the standard generators length $1$. The following inequality shows that $n$ must be equal to $1$, provided $g\neq id$: $$\operatorname{deg}(a)=\operatorname{deg}(gag^{-1})=\operatorname{deg}(a^n)=n\operatorname{deg}(a).$$ Therefore, the normal subgroup $\langle a\rangle$ is central.
\end{proof}

In particular, the orbifold fundamental group of $\mathbb{P}\mathcal{H}^{\operatorname{even}}(6)$ is isomorphic to the quotient of $A_{E_8}$ by a central cyclic subgroup. The center of $A_{E_8}$ is infinite cyclic and generated by the \textit{Garside element} $\Delta_{E_8}$. We will show that $\Delta_{E_8}$ generates the central subgroup of Lemma \ref{central} and in particular that the orbifold fundamental group $\pi_1^{orb}(\mathbb{P}\mathcal{H}^{\operatorname{even}}(6))$ is isomorphic to the inner automorphism group $\operatorname{Inn}(A_{E_8})$.\\

Let $R$ be the root system $E_8$ and denote by $V_R$ the open complement in $\mathbb{R}^8$ of the hyperplanes family $\{H_\alpha\mid\alpha\in I_R\}$ associated to $R$. The Artin group $A_{E_8}$ has an interpretation as a fundamental group by Theorem \ref{arnold}. Let us pick a chamber $C\subset V_R$ and a point $p\in C$. The fundamental group of the complexification of $V_R$, denoted by $\mathbb{V}_R$, modulo the Coxeter group $W_R$ and based at the point represented by $p$ is isomorphic to $A_{E_8}$.\\

We now construct the Garside element $\Delta_{R}$ as the homotopy class of a loop in $\mathbb{V}_R/W_R$. The following construction is due to Brieskorn \cite{Brieskorn} and can also be found in \cite[Section 2]{Looijenga2008}. For every $x\in V_R$, we define $C_x$ to be the either $V_R$, if $x$ is not contained in any hyperplane $H_\alpha$, or the intersection of all open half-spaces $H_\alpha^+$ containing the chamber $C$ and bounded by $H_\alpha$ if $x\in H_\alpha$. The set $$\mathbb{U}=\{x+iy\mid y\in C_x\}$$ is an open subset of $\mathbb{V}_R$ and it is star-like with respect to any point in $iC$. Therefore, the set $\mathbb{U}$ is contractible. As a result, there is a unique homotopy type of arc $\gamma_R$ between $p$ and $-p$ entirely contained in $\mathbb{U}$. Since $-id_{V_R}\in W_R$ in case $R=E_8$, the arc $\gamma_R$ projects to a loop in $\mathbb{V}_R/W_R$. The Garside element $\Delta_{E_8}$ can be interpreted as the homotopy class of $[\gamma_R]$ in $\mathbb{V}_R/W_R$.\\

The arc $\gamma_R$ can be taken to be the composition $\delta*\sigma$ of the following path segments
\begin{align*}
    \sigma:[0,1]&\rightarrow\mathbb{U}&\delta:[0,1]&\rightarrow\mathbb{U}\\
    t&\mapsto h(t)p & t&\mapsto ih(t)p,
\end{align*}

where $h(t)=(1-t)+it$.

\begin{figure}[h]
    \centering
    \begin{minipage}{0.45\textwidth}
        \centering
        \includegraphics[width=0.8\textwidth]{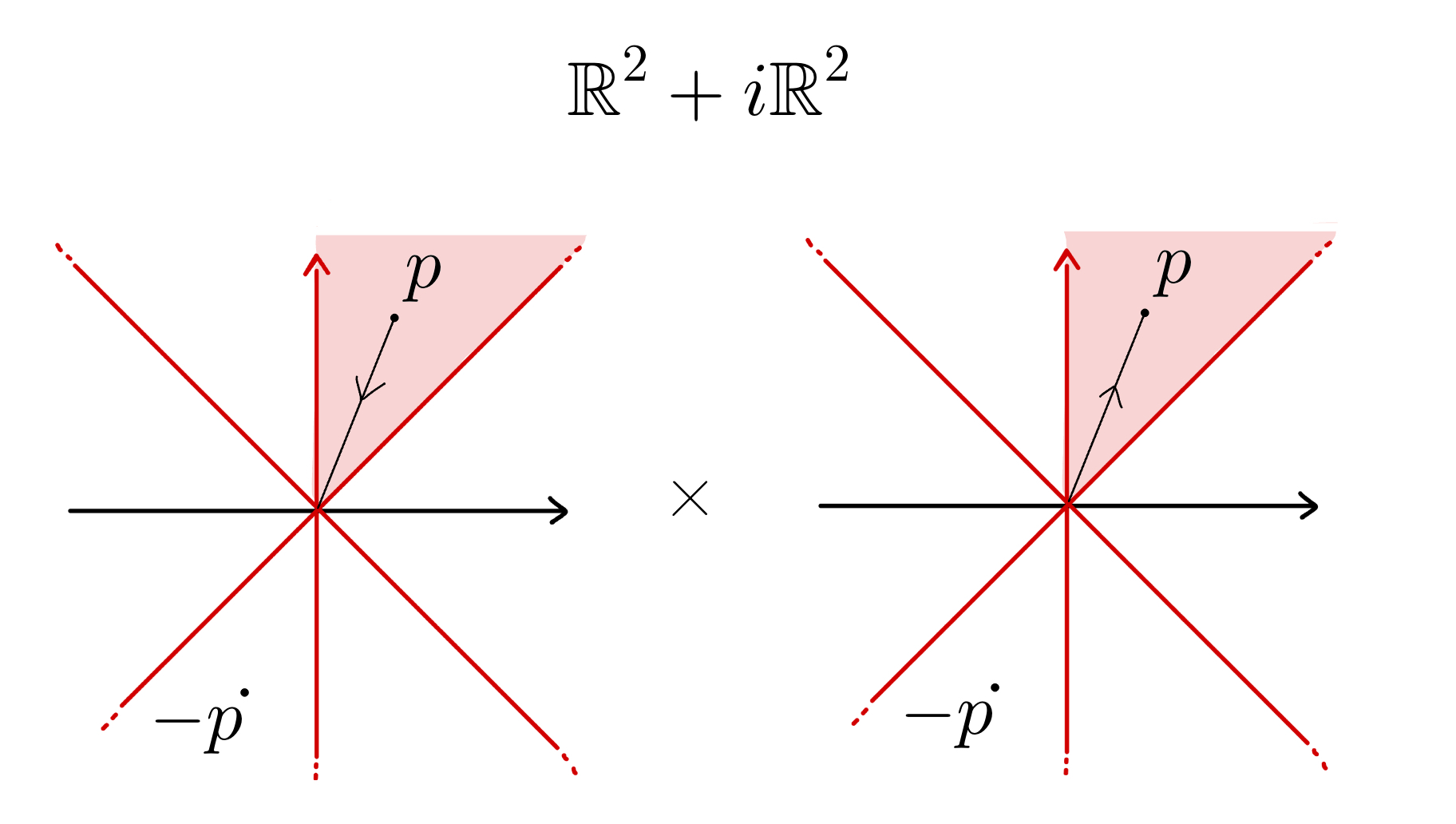} 
    \end{minipage}\hfill
    \begin{minipage}{0.45\textwidth}
        \centering
        \includegraphics[width=0.8\textwidth]{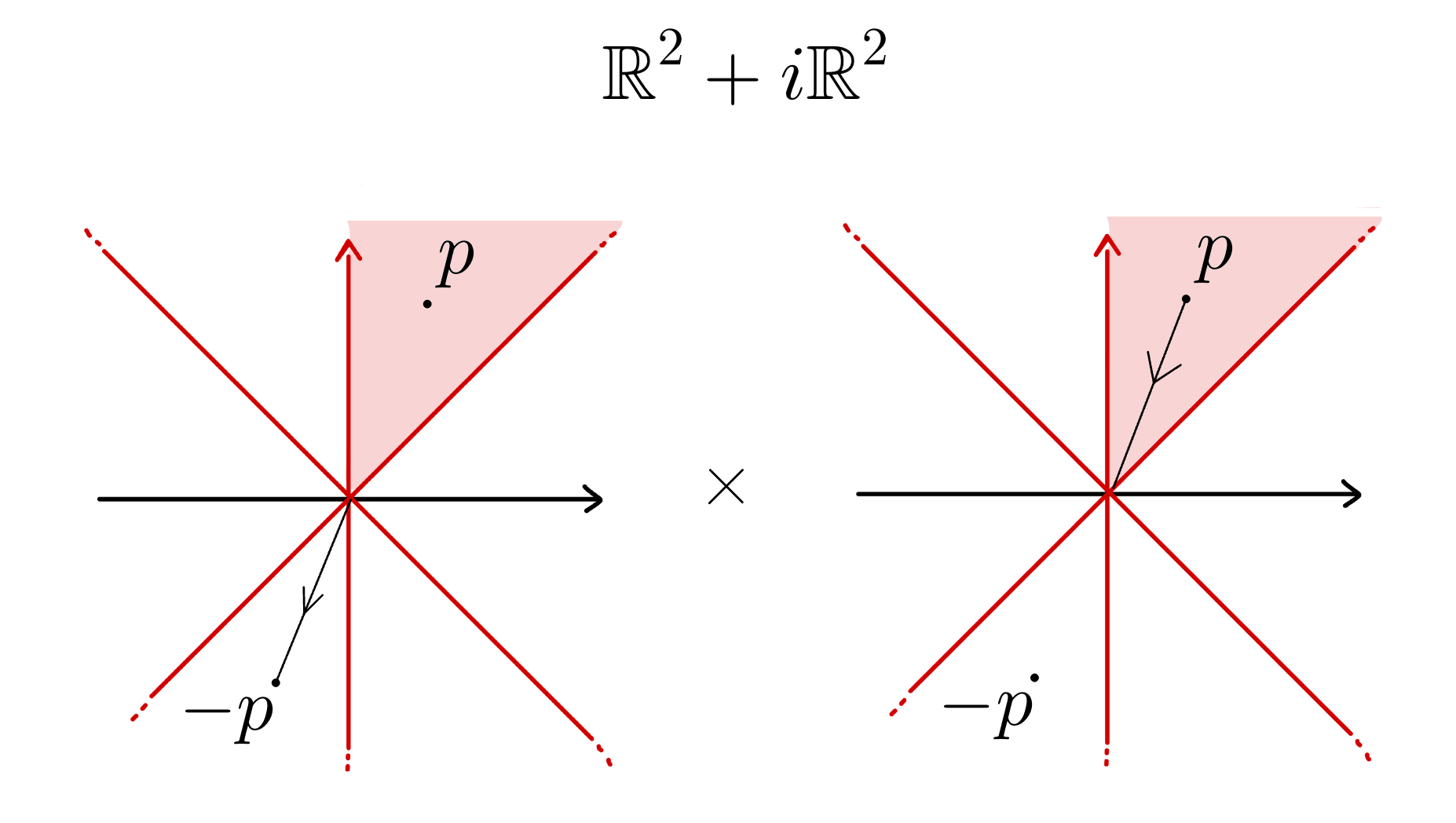} 
    \end{minipage}
    \caption{An example of the path segments $\sigma$ (on the left of the picture) and of $\delta$ (on the right hand side of the picture) in the case $R=A_3$. The coloured area represents the complexified chamber $C+iC$.}
\end{figure}

\begin{proposition}
    Let $R$ be the root system $E_8$. The image by $\tau_R$ of the homotopy class of the loop $[\gamma_R]$ in $\mathbb{V}_R/W_R$ generates the fundamental group of the $\mathbb{C}^*$-fiber associated to the quotient map $U_R\rightarrow\mathbb{P}\mathcal{H}^{\operatorname{even}}(6)$.
\end{proposition}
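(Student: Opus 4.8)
The plan is to recognise $\tau_R([\gamma_R])$ as a single revolution of a $\mathbb{C}^*$-orbit inside $U_R$ for $R=E_8$. First I would replace $\gamma_R$ by the scaling loop. Because $-\operatorname{id}\in W_{E_8}$, the path $\theta\mapsto e^{i\theta}p$, $\theta\in[0,\pi]$, runs from $p$ to $-p=(-\operatorname{id})p$ and so descends to a loop $\ell$ in $\mathbb{V}_R/W_R$. I claim $[\gamma_R]=[\ell]$. The complex scalars $c$ occurring along $\sigma$, along $\delta$, and along the upper unit semicircle all avoid the closed ray $i\mathbb{R}_{\leq 0}$, and for every such $c$ one has $c\,p\in\mathbb{U}$: if $\operatorname{Re}c\neq 0$ then $(\operatorname{Re}c)\,p$ lies in the open chamber $C$ or $-C$ and imposes no condition, whereas if $\operatorname{Re}c=0$ then $\operatorname{Im}c>0$ and $(\operatorname{Im}c)\,p\in C=C_{0}$. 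Hence both $\gamma_R$ and $\ell$ lie in the image under the linear embedding $c\mapsto c\,p$ of the simply connected set $\mathbb{C}\setminus i\mathbb{R}_{\leq 0}$, a subset of $\mathbb{U}$, so they are homotopic rel endpoints inside $\mathbb{U}$; projecting to $\mathbb{V}_R/W_R$ gives $[\gamma_R]=[\ell]$.

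Next I would push $\ell$ forward by $\tau_R$. Writing $f_1,\dots,f_8$ for the basic invariants of $W_{E_8}$ and $d_1,\dots,d_8$ for their degrees (all even), one gets $\tau_R(e^{i\theta}p)=\bigl(e^{id_1\theta}f_1(p),\dots,e^{id_8\theta}f_8(p)\bigr)$, and the substitution $\phi=2\theta$ turns $\ell$ into the loop $\phi\mapsto\bigl(e^{i(d_1/2)\phi}f_1(p),\dots,e^{i(d_8/2)\phi}f_8(p)\bigr)$, $\phi\in[0,2\pi]$. This is exactly the orbit map $\lambda\mapsto\lambda\cdot\tau_R(p)$ of Pinkham's $\mathbb{C}^*$-action of Theorem~\ref{pinkham} restricted to the unit circle and traversed once: the weights of that action on the deformation coordinates are $d_1/2,\dots,d_8/2$, which also equal the quasi-homogeneous weights $15-\deg g_i$ of the versal deformation $x^3+y^5+\sum_i s_i g_i$ of $f_R=x^3+y^5$ (with $\deg x=5$, $\deg y=3$) --- both lists being $\{1,4,6,7,9,10,12,15\}$. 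Conceptually, $\tau_R$ intertwines the scaling action on $\mathbb{V}_R/W_R$ with Pinkham's action, the factor $2$ being forced because $-\operatorname{id}\in W_{E_8}$ acts trivially on $\mathbb{V}_R/W_R$.

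Finally, the stabiliser of $\tau_R(p)$ under Pinkham's action is trivial: since $d_1=2$ one may take $f_1=|{\cdot}|^2$, so $f_1(p)>0$ and any $\lambda$ fixing $\tau_R(p)$ satisfies $\lambda^{d_1/2}=\lambda=1$; this also follows at once from the freeness of the $\mathbb{C}^*$-action on the Teichm\"uller cover. Therefore the $\mathbb{C}^*$-orbit of $\tau_R(p)$ --- the fibre through $\tau_R(p)$ of the principal $\mathbb{C}^*$-bundle $U_R\to\mathbb{P}\mathcal{H}^{\operatorname{even}}(6)$ --- is homeomorphic to $\mathbb{C}^*$ via the orbit map, and a once-around loop generates $\pi_1\cong\mathbb{Z}$. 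By the two previous paragraphs this loop is $\tau_R([\gamma_R])$, which is the Proposition. I expect the straightening step to be the main obstacle: one must be careful about precisely which complex multiples of $p$ belong to $\mathbb{U}$ and verify that the homotopy never leaves $\mathbb{U}$. After that, the argument reduces to the single observation that $\tau_R$ converts a half-turn of the scaling action upstairs into a full turn of Pinkham's $\mathbb{C}^*$ downstairs --- which is exactly where the evenness of the invariant degrees of $W_{E_8}$ is used.
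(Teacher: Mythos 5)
Your argument is correct and follows essentially the same route as the paper: both identify $\tau_R(\gamma_R)$ with a path of scalars from $1$ to $-1$ acting on $\tau_R(p)$ with the (even) invariant degrees as weights, and conclude that this winds exactly once around the $\mathbb{C}^*$-orbit. The only differences are cosmetic --- the paper keeps the weight-$d_i$ action and describes the orbit as $\mathbb{C}^*/\mathbb{Z}_2$ generated by an angle-$\pi$ arc, whereas you reparametrise to Pinkham's free weight-$(d_i/2)$ action; your explicit straightening of $\gamma_R$ to the semicircle inside $\mathbb{U}$ is a useful extra detail the paper leaves implicit.
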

\begin{proof}
    We want to show that the homotopy class of the loop $\tau_{R}(\gamma_R)$ generates the fundamental group of the $\mathbb{C}^*$-fiber associated with the quotient map $U_R\rightarrow\mathbb{P}\mathcal{H}^{\operatorname{even}}(6)$. The punctured complex plane $\mathbb{C}^*$ acts on $U_{E_8}$ component-wise with weights given by the degrees $d_1,\dots,d_8$ of the homogeneous polynomials $f_1,\dots,f_8$. In particular, the great common divisor of $d_1,\dots,d_8$ is $2$ and the the fiber $\mathcal{O}_p=\{(\lambda^{d_1}p_1,\dots,\lambda^{d_8}p_8)\mid\lambda\in\mathbb{C}^*\}$ of $p\in U_R$ is homeomorphic to $\mathbb{C}^*/\mathbb{Z}_2$ where the underlying relation is given by $z\sim -z$. The fundamental group of $\mathcal{O}_p$ is isomorphic to $\mathbb{Z}$ and generated by the image of any arc in $\mathbb{C}^*$ tracing an angle of $\pi$. The arc $\gamma_R$ traces an angle of $\pi$ between the endpoints $p$ and $-p$ and therefore the image 
    \begin{align*}
        \tau_{E_8}(\gamma_R):[0,1]&\rightarrow\mathbb{U}\\
    t&\mapsto\biggl\{ \begin{array}{cc}
       (h(2t)^{d_1}f_1(p),\dots,h(2t)^{d_8}f_8(p))  & \text{if }t\in[0,\frac{1}{2}] \\
       (i^{d_1}h(2t-1)^{d_1}f_1(p),\dots,i^{d_1}h(2t-1)^{d_8}f_8(p))  & \text{if }t\in[\frac{1}{2},1].
    \end{array}
    \end{align*}

    represents a generator of the fundamental group of the $\mathbb{C}^*$-fiber $\mathcal{O}_p$.
\end{proof}

We obtain the following result from the short exact sequence in (\ref{quot}).

\begin{corollary}
\label{fundgrp}
    The orbifold fundamental group of $\mathbb{P}\mathcal{H}^{\operatorname{even}}(6)$ is isomorphic to the inner automorphism group $\operatorname{Inn}(A_{E_8})$. Then, the group $\pi_1^{orb}(\mathcal{H}^{\operatorname{even}}(6))$ is a central extension of $\operatorname{Inn}(A_{E_8})$.
\end{corollary}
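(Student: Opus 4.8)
The plan is to read the statement off the short exact sequence (\ref{quot}) once the kernel is identified. Recall that (\ref{quot}) is $1\rightarrow\pi_1(\mathbb{C}^*)\rightarrow\pi_1(U_R)\rightarrow\pi_1^{orb}(\mathbb{P}\mathcal{H}^{\operatorname{even}}(6))\rightarrow1$ with $R=E_8$, and that by Theorem \ref{arnold} the middle term is the Artin group $A_{E_8}$. Thus $\pi_1^{orb}(\mathbb{P}\mathcal{H}^{\operatorname{even}}(6))$ is the quotient of $A_{E_8}$ by the image of $\pi_1(\mathbb{C}^*)$, which by Lemma \ref{central} is an (infinite) central cyclic subgroup. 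It remains only to identify this image with the whole center $Z(A_{E_8})$.

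For this I would use the Proposition immediately preceding the corollary: the kernel of $\pi_1(U_R)\twoheadrightarrow\pi_1^{orb}(\mathbb{P}\mathcal{H}^{\operatorname{even}}(6))$ is generated by the homotopy class of $\tau_R(\gamma_R)$, i.e. by the image of the loop $\gamma_R$ under the homeomorphism $\mathbb{V}_R/W_R\to U_R$ induced by $\tau_R$. Transporting back along this homeomorphism, the kernel of (\ref{quot}) corresponds in $\pi_1(\mathbb{V}_R/W_R)\cong A_{E_8}$ to the cyclic subgroup generated by $[\gamma_R]$, which by Brieskorn's construction is precisely the Garside element $\Delta_{E_8}$. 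Since the center of $A_{E_8}$ is infinite cyclic and generated by $\Delta_{E_8}$, the kernel of (\ref{quot}) is exactly $Z(A_{E_8})$. Hence $\pi_1^{orb}(\mathbb{P}\mathcal{H}^{\operatorname{even}}(6))\cong A_{E_8}/Z(A_{E_8})$, and since conjugation identifies $G/Z(G)$ with $\operatorname{Inn}(G)$ for every group $G$, we conclude $\pi_1^{orb}(\mathbb{P}\mathcal{H}^{\operatorname{even}}(6))\cong\operatorname{Inn}(A_{E_8})$.

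For the second assertion I would invoke Proposition \ref{projpi} with $\mathcal{C}=\mathcal{H}^{\operatorname{even}}(6)$: it yields the central short exact sequence $0\rightarrow\pi_1(\mathbb{C}^*)\rightarrow\pi_1^{orb}(\mathcal{H}^{\operatorname{even}}(6))\rightarrow\pi_1^{orb}(\mathbb{P}\mathcal{H}^{\operatorname{even}}(6))\rightarrow1$, and substituting the identification just obtained exhibits $\pi_1^{orb}(\mathcal{H}^{\operatorname{even}}(6))$ as a central extension of $\operatorname{Inn}(A_{E_8})$ by $\mathbb{Z}$.

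The only delicate point, and the one I would check most carefully, is that the image of $\pi_1(\mathbb{C}^*)$ in $A_{E_8}$ is the full center rather than a proper finite-index cyclic subgroup such as $\langle\Delta_{E_8}^2\rangle$: this hinges on the fiber $\mathcal{O}_p\cong\mathbb{C}^*/\mathbb{Z}_2$ being generated by a half-turn (because $\gcd(d_1,\dots,d_8)=2$ and $-\operatorname{id}\in W_{E_8}$), so that its generator matches the arc $\gamma_R$ running from $p$ to $-p$ rather than a full loop. This is exactly what the preceding Proposition supplies, so the remainder of the argument is purely formal.
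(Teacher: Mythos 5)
Your proposal is correct and follows essentially the same route as the paper: the paper's proof of this corollary is just the assembly of the sequence (\ref{quot}), Theorem \ref{arnold}, Lemma \ref{central}, and the preceding Proposition identifying the kernel with $\langle\Delta_{E_8}\rangle=Z(A_{E_8})$, with the second assertion following from Proposition \ref{projpi}. Your flagging of the possible ambiguity between $\langle\Delta_{E_8}\rangle$ and a proper subgroup such as $\langle\Delta_{E_8}^2\rangle$ is precisely the point the paper's Proposition on $\tau_R(\gamma_R)$ is designed to settle.
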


\section{The kernel of the monodromy map}

In this section we prove the following.

\begin{theorem}[Large Kernel Property]
\label{lkp}
    The kernel of the monodromy $\rho:\pi_1^{orb}(\mathcal{H}^{\operatorname{even}}(6))\rightarrow\operatorname{Mod}(\Sigma_{4,1})$ contains a non-abelian free group of rank $2$.
\end{theorem}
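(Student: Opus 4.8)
The plan is to push the statement down to the projective orbifold and then invoke the theory of acylindrically hyperbolic groups. First, since rescaling an abelian differential by $\mathbb{C}^*$ changes neither the underlying Riemann surface nor its marking, the monodromy $\rho$ kills the generator of the central subgroup $\pi_1(\mathbb{C}^*)\le\pi_1^{orb}(\mathcal{H}^{\operatorname{even}}(6))$ appearing in Proposition~\ref{projpi}; hence $\rho$ factors as $\rho=\bar\rho\circ\pi$, where $\pi\colon\pi_1^{orb}(\mathcal{H}^{\operatorname{even}}(6))\twoheadrightarrow\operatorname{Inn}(A_{E_8})$ is the projection of Proposition~\ref{projpi} and Corollary~\ref{fundgrp} and $\bar\rho\colon\operatorname{Inn}(A_{E_8})\to\operatorname{Mod}(\Sigma_{4,1})$. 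It therefore suffices to exhibit a free subgroup $F\le\ker\bar\rho$ of rank $2$: then $\pi^{-1}(F)$ is a central extension of $F$ by $\pi_1(\mathbb{C}^*)\cong\mathbb{Z}$, which splits because $H^2(F;\mathbb{Z})=0$, and a section embeds $F$ as a subgroup of $\ker\rho$.

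Second, I would verify that $\bar\rho$ is not injective. By Corollary~\ref{fundgrp} and the exact sequence~\eqref{quot}, $\operatorname{Inn}(A_{E_8})$ is the quotient of the torsion-free group $A_{E_8}$ by a central infinite cyclic subgroup; since $A_{E_8}=\pi_1(U_{E_8})$ with $U_{E_8}$ a $K(A_{E_8},1)$ affine variety of complex dimension $8$ (Theorem~\ref{arnold}), one gets $\operatorname{cd}(A_{E_8})=8$ and hence $\operatorname{vcd}(\operatorname{Inn}(A_{E_8}))=7$. On the other hand, by Calderon--Salter \cite{CalderonSalterFramed2022} the image of $\bar\rho$ has finite index in $\operatorname{Mod}(\Sigma_{4,1})$, so its virtual cohomological dimension equals $\operatorname{vcd}(\operatorname{Mod}(\Sigma_{4,1}))=4\cdot 4-4+1=13$. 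An injective $\bar\rho$ would force these numbers to agree, so $\ker\bar\rho\neq 1$. (A concrete nontrivial element of $\ker\bar\rho$ can also be produced from a chain relation among the $E_8$-configuration of vanishing cycles on $\Sigma_{4,1}$ whose bounding curve is peripheral: it yields a word in the standard generators that dies in $\operatorname{Mod}(\Sigma_{4,1})$ but not in $A_{E_8}$.)

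Third, I would promote non-triviality of $\ker\bar\rho$ to the presence of a rank-$2$ free subgroup. The group $\operatorname{Inn}(A_{E_8})=A_{E_8}/Z(A_{E_8})$ is acylindrically hyperbolic, since the central quotient of any irreducible Artin group of spherical type and rank at least $2$ is (Calvez--Wiest); moreover its maximal finite normal subgroup is trivial, because the preimage in $A_{E_8}$ of such a subgroup is a normal, torsion-free, virtually cyclic — hence infinite cyclic — subgroup, which by the degree-homomorphism argument of Lemma~\ref{central} is central and therefore contained in $Z(A_{E_8})$. A nontrivial normal subgroup of an acylindrically hyperbolic group with trivial finite radical contains a loxodromic element (Dahmani--Guirardel--Osin), and this element together with a suitably generic conjugate of it generates a rank-$2$ free subgroup of $\ker\bar\rho$ by ping-pong on the Gromov boundary. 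Pulling this subgroup back through $\pi$ as in the first step proves the theorem.

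The step that I expect to demand the most care is identifying the image of $\bar\rho$ precisely enough in genus $4$ to read off its virtual cohomological dimension, i.e. having Calderon--Salter's description apply to $\mathcal{H}^{\operatorname{even}}(6)$ so that the image has finite index in $\operatorname{Mod}(\Sigma_{4,1})$ — it would in fact suffice that the image contain the Torelli group $\mathcal{I}(\Sigma_{4,1})$, whose virtual cohomological dimension is $10>7$ — together with the cohomological-dimension bookkeeping for $\operatorname{Inn}(A_{E_8})$; the concluding ping-pong argument is routine once acylindrical hyperbolicity is in hand.
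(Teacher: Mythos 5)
Your overall architecture is close to the paper's: Step 1 (reduce to $\bar\rho\colon\operatorname{Inn}(A_{E_8})\to\operatorname{Mod}(\Sigma_{4,1})$ and lift a rank-$2$ free subgroup through the central $\mathbb{Z}$-extension using $H^2(F;\mathbb{Z})=0$) matches the paper's reduction to the projective monodromy $\rho_{\mathbb{P}}$, and Step 3 (Calvez--Wiest acylindrical hyperbolicity of the central quotient, triviality of the finite radical via the degree homomorphism, and the Dahmani--Guirardel--Osin dichotomy for normal subgroups) is essentially the mechanism behind the paper's Theorem \ref{artin}, i.e.\ Theorem B of \cite{Giannini2023}. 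The genuine gap is Step 2, the non-injectivity of $\bar\rho$, and your argument for it does not work. The monodromy image preserves the framing of $\Sigma_{4,1}$ induced by the abelian differential, so by Calderon--Salter it lies in (indeed equals) the framed mapping class group; this subgroup has \emph{infinite} index in $\operatorname{Mod}(\Sigma_{4,1})$, since the orbit of a framing of fixed Arf invariant is an infinite set (framings form a torsor over $H^1(\Sigma_{4,1};\mathbb{Z})$). So the premise ``the image has finite index, hence vcd $13$'' is false. The fallback is also false: the Torelli group is \emph{not} contained in the framed mapping class group, because it acts nontrivially on winding-number functions via the Chillingworth homomorphism (a bounding-pair map changes winding numbers). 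Hence the vcd comparison never gets off the ground, and the parenthetical remark about a chain relation among the $E_8$ vanishing cycles is a pointer to the computation that would be needed, not a proof.

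The paper closes exactly this gap by citation: the Picard--Lefschetz theorem makes the monodromy $\pi_1(U_{E_8})\to\operatorname{Mod}(\Sigma_{4,1})$ a geometric homomorphism, and Theorem \ref{artin} (resting on Wajnryb's explicit non-injectivity results for $E_n$-configurations of Dehn twists, $n\geq 6$, combined with the acylindrical-hyperbolicity machinery you reinvent in your Step 3) already yields a rank-$2$ free subgroup in the kernel, which then survives the central quotient to $\operatorname{Inn}(A_{E_8})$. To repair your Step 2 you must either import Wajnryb's theorem as the paper does, or establish an actual obstruction to injectivity on the true image (the framed mapping class group of $\Sigma_{4,1}$), e.g.\ a correct vcd bound for that infinite-index subgroup; nothing in your draft does either.
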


Corollary \ref{fundgrp} implies that the monodromy $\rho:\pi_1^{orb}(\mathcal{H}^{\operatorname{even}}(6))\rightarrow\operatorname{Mod}(\Sigma_{4,1})$ factors through a homomorphism from $\operatorname{Inn}(A_{E_8})$ to the mapping class group $\operatorname{Mod}(\Sigma_{4,1})$ of a genus $4$ closed surface with a marked point.  Indeed, the following diagram commutes

\[
\xymatrix{
  & \pi_1^{orb}(\mathcal{H}^{\operatorname{even}}(6)) \ar[d]^{} \ar[dl]_-{} \\
   \pi_1^{orb}(\mathbb{P}\mathcal{H}^{\operatorname{even}}(6)) \ar[r]^{} & \operatorname{Mod}(\Sigma_{4,1}),
}
\]

where $\rho$ is the vertical map and the oblique one is induced by the principal $\mathbb{C}^*$-bundle $q_\mathcal{C}:\mathcal{TC}\rightarrow\mathbb{P}(\mathcal{TC})$ for $\mathcal{C}=\mathcal{H}^{\operatorname{even}}(6)$.
The horizontal map is the monodromy $\rho_\mathbb{P}:\pi_1^{orb}(\mathbb{P}\mathcal{H}^{\operatorname{even}}(6))\rightarrow\operatorname{Mod}(\Sigma_{4,1})$ associated to the projective stratum component $\mathbb{P}\mathcal{H}^{\operatorname{even}}(6)$. Notice that, if the Large Kernel Property holds for the monodromy $\rho_\mathbb{P}$, then Theorem \ref{lkp} follows immediately.\\

The monodromy $\rho_{\mathbb{P}}$ is a homomorphism from $\operatorname{Inn}(A_{E_8})$ to $\operatorname{Mod}(\Sigma_{4,1})$, induced by a \textit{geometric homomorphism}. Geometric homomorphisms are maps form Artin groups $A_\Gamma$ to mapping class groups such that standard generators are mapped to Dehn twists about curves with an intersection pattern that respects the pattern of $\Gamma$. The following is a classical theorem in the theory of plane curve singularities. See, for example, \cite[Chapter 3]{arnoldbook}.

\begin{theorem}[Picard-Lefschetz Theorem] Let $R$ be one of the irreducible root systems of type $A_n,D_n,E_6$, $E_7$ or $E_8$. The monodromy $\pi_1(U_R)\rightarrow\operatorname{Mod}(\Sigma_{g,n})$ of the versal deformation space is a geometric homomorphism.
\end{theorem}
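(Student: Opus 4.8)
The plan is to realize the monodromy geometrically through the Milnor fibration of the germ $f_R$ and then invoke the classical Picard--Lefschetz formula together with the theory of distinguished bases of vanishing cycles. Restricting $\pi_\Gamma$ to its smooth locus, the projectivized fibers $\mathbb{V}(F_{\Gamma,s})$ assemble into a locally trivial fiber bundle over $U_R$ by Ehresmann's fibration theorem, with fiber diffeomorphic to $\Sigma_{g,n}$; the monodromy of the statement is precisely the action of $\pi_1(U_R)$ on this fiber. By Theorem \ref{arnold} and the Brieskorn--Deligne description, $\pi_1(U_R)\cong A_R$, and the standard generators of $A_R$ are realized as meridian loops encircling the smooth part of the discriminant $\Delta\subset\mathbb{C}^m$, the locus over which the fiber degenerates.

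First I would analyze what happens along such a meridian. At a generic (smooth) point of the discriminant the fiber acquires a single ordinary node, i.e.\ an $A_1$ singularity, and a small loop around that point creates and then contracts a single vanishing cycle $\delta$. The Picard--Lefschetz formula identifies the resulting monodromy with the Dehn twist $T_\delta$ about $\delta$ (up to orientation conventions). Since each standard generator of $A_R$ is such a meridian, it maps to a Dehn twist $T_{\delta_i}$ about a vanishing cycle $\delta_i$ supported in the interior of the fiber, away from the marked point at infinity; hence the monodromy lands in $\operatorname{Mod}(\Sigma_{g,n})$ as required.

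The core of the argument is to pin down the geometric configuration of the vanishing cycles $\delta_1,\dots,\delta_n$. Choosing a generic complex line through the basepoint that meets $\Delta$ transversally, one extracts a \emph{distinguished basis} of vanishing cycles, and the key classical input---due to Gabrielov, A'Campo and Gusein-Zade for the simple singularities---is that for an ADE germ $f_R$ this basis can be arranged so that the curves $\delta_i$ realize the Coxeter--Dynkin diagram of $R$ geometrically: two cycles corresponding to adjacent vertices meet in a single point, while cycles corresponding to non-adjacent vertices are disjoint. I expect this step to be the main obstacle, since it is not enough to know that the algebraic intersection form is the ADE Cartan matrix; one needs the \emph{geometric} intersection numbers on the Milnor fiber, which is exactly what A'Campo's real morsification and divide technique supplies.

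Granting the geometric intersection pattern, the conclusion is formal. Two Dehn twists commute when their curves are disjoint and satisfy the braid relation $T_{\delta_i}T_{\delta_j}T_{\delta_i}=T_{\delta_j}T_{\delta_i}T_{\delta_j}$ when their curves meet once. These are precisely the two families of relations in the Artin presentation of $A_\Gamma$: adjacency yields the braid relation and non-adjacency yields commutation. Thus the assignment $a_i\mapsto T_{\delta_i}$ respects all defining relations, the monodromy is well defined on $A_R$, and its image is a system of Dehn twists about curves whose intersection pattern reproduces $\Gamma$. This is exactly the assertion that the monodromy is a geometric homomorphism.
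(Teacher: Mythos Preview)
The paper does not prove this theorem; it is stated as a classical result in the theory of plane curve singularities with a reference to \cite[Chapter 3]{arnoldbook}. Your outline is a faithful sketch of the standard argument recorded there: restrict to the smooth locus to obtain a locally trivial family via Ehresmann, identify the standard Artin generators with meridians about the smooth part of the discriminant, apply the local Picard--Lefschetz formula at a generic discriminant point to see that each such meridian acts by a Dehn twist about a vanishing cycle, and then invoke the distinguished-basis theory for simple singularities (A'Campo, Gusein-Zade, Gabrielov) to realize the geometric intersection pattern of these cycles as the Dynkin diagram of $R$. There is therefore no in-paper proof to compare against; your proposal supplies precisely the classical justification the paper is citing.
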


\begin{figure}[h]
    \centering
    \includegraphics[scale=0.2]{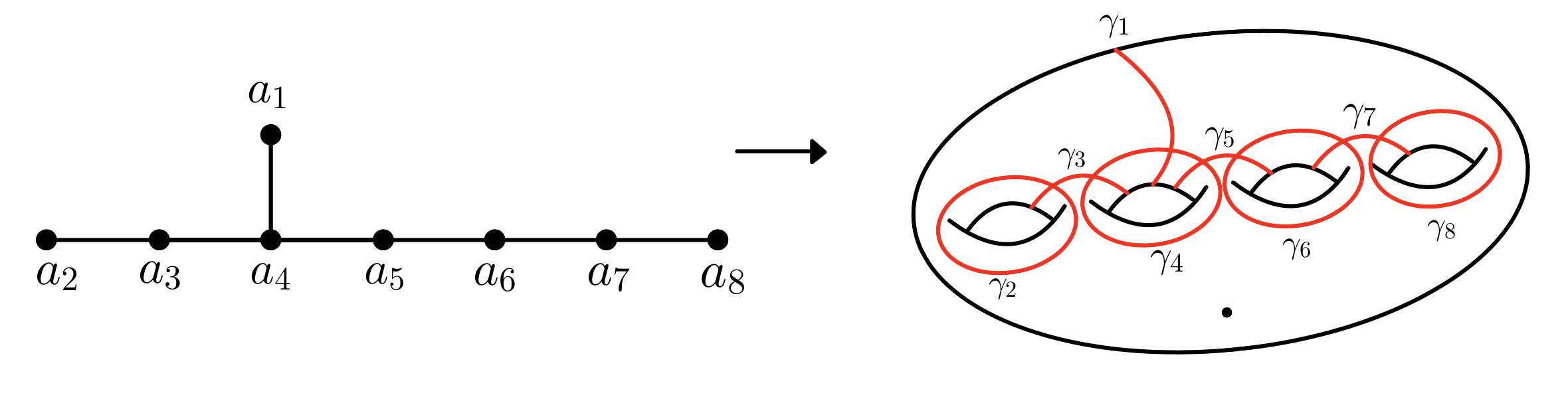}
    \caption{A correspondence between the $E_8$ Dinkin diagram on some closed curves on $\Sigma_{4,1}$. Each vertex corresponds to a simple closed curve on the punctured surface on the right-hand side. The geometric homomorphism sends each standard generator of $A_{E_8}$ to the corresponding Dehn twist.}
    \label{fig:enter-label}
\end{figure}

The theorem below has been proved in \cite[Theorem B]{Giannini2023} and builds on the work of Wajnryb \cite{Wajnryb1999} using the acylindrical hyperbolicity of finite-type Artin groups verified by Calvez-Wiest \cite{CalvezWiestAcyArt2017}, a generalized notion of hyperbolicity. Here, a Ping-Pong strategy detects non-abelian free groups.

\begin{theorem}
\label{artin}
    Suppose a finite graph $\Gamma$ contains the Dynkin diagram $E_6$ as a subgraph. The kernel of any geometric homomorphism of $A_\Gamma$ contains a copy of the non-abelian free group of rank $2$.
\end{theorem}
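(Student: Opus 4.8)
The plan is to reduce immediately to the root system $E_6$ and then play Ping--Pong inside an acylindrically hyperbolic group. Since $E_6$ occurs as a full subdiagram of $\Gamma$ (which is the situation in all cases of interest, e.g. $\Gamma=E_8$), a theorem of van der Lek on standard parabolic subgroups gives an injection $A_{E_6}\hookrightarrow A_\Gamma$, and restriction turns a geometric homomorphism $\phi\colon A_\Gamma\to\operatorname{Mod}(\Sigma_{g,n})$ into a geometric homomorphism $\psi\colon A_{E_6}\to\operatorname{Mod}(\Sigma_{g,n})$: the six Dehn twists attached to the vertices of the $E_6$ subdiagram already realise the $E_6$ intersection pattern. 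Since $\ker\psi=\ker\phi\cap A_{E_6}\subseteq\ker\phi$, it suffices to produce a copy of $F_2$ inside $\ker\psi$.

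The heart of the argument is that $\ker\psi$ is a \emph{large} normal subgroup of $\operatorname{Inn}(A_{E_6})$. On one hand, Wajnryb \cite{Wajnryb1999} showed that the geometric (singularity) representation of $A_{E_6}$ is not faithful, so $\ker\psi\neq 1$. On the other hand, using the chain relations in the mapping class group one checks that $\psi(\Delta_{E_6}^{2})$ --- a product of right-handed Dehn twists about the boundary of a regular neighbourhood of the $E_6$-configuration --- has infinite order; equivalently $\ker\psi\cap Z(A_{E_6})=1$, where $Z(A_{E_6})=\langle\Delta_{E_6}^{2}\rangle$. Consequently $\ker\psi$ embeds into $\operatorname{Inn}(A_{E_6})=A_{E_6}/Z(A_{E_6})$ as a nontrivial --- hence infinite, since $A_{E_6}$ is torsion-free --- normal subgroup.

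Now I would invoke Calvez--Wiest \cite{CalvezWiestAcyArt2017}: $\operatorname{Inn}(A_{E_6})$ is acylindrically hyperbolic, and its action on the associated hyperbolic graph has loxodromic elements with the WPD property. An infinite normal subgroup of an acylindrically hyperbolic group is not contained in the maximal finite normal subgroup, so the image of $\ker\psi$ contains a loxodromic WPD element $\bar g$; by normality it also contains $\bar h\bar g\bar h^{-1}$ for every $\bar h\in\operatorname{Inn}(A_{E_6})$, and for a suitable $\bar h$ the elements $\bar g$ and $\bar h\bar g\bar h^{-1}$ have independent axes, so the usual Ping--Pong lemma gives $\langle\bar g^{k},(\bar h\bar g\bar h^{-1})^{k}\rangle\cong F_2$ for $k\gg 0$. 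Pulling this $F_2$ back through the injection $\ker\psi\hookrightarrow\operatorname{Inn}(A_{E_6})$ yields $F_2\le\ker\psi\subseteq\ker\phi$, as desired.

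The main obstacle is the second paragraph: knowing only that $\psi$ is not injective does not by itself preclude $\ker\psi$ from being a (possibly trivial) cyclic central subgroup, which would give merely the trivial normal subgroup of $\operatorname{Inn}(A_{E_6})$. One therefore has to pin down the order of $\psi(\Delta_{E_6}^{2})$ --- via the chain/star relations, or via Calderon--Salter's description \cite{CalderonSalterFramed2022} of the image of $\psi$ as a framed stabiliser --- to be sure the induced normal subgroup of $\operatorname{Inn}(A_{E_6})$ is genuinely infinite; in the degenerate case where $\psi(\Delta_{E_6}^{2})$ had finite order one would instead quotient by $\ker\psi\cap Z(A_{E_6})$ and recover $F_2$ using that a central extension of a free group splits. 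Once the kernel is known to be large, the implication ``large normal subgroup of an acylindrically hyperbolic group $\Rightarrow$ contains $F_2$'' is the standard Ping--Pong package resting on \cite{CalvezWiestAcyArt2017}, and the rest is formal. A minor point is that the reduction in the first paragraph genuinely needs $E_6$ to be a \emph{full} subdiagram, so that the restricted twist configuration is still of type $E_6$.
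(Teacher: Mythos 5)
Your proposal follows essentially the same route the paper takes: the paper does not prove Theorem \ref{artin} itself but cites \cite[Theorem B]{Giannini2023}, describing exactly the strategy you reconstruct --- non-injectivity from Wajnryb \cite{Wajnryb1999}, acylindrical hyperbolicity of the central quotient from Calvez--Wiest \cite{CalvezWiestAcyArt2017}, and Ping--Pong to extract the free group. The one genuine verification you correctly flag (that the kernel is not absorbed by the center, so that it yields an infinite normal subgroup of $\operatorname{Inn}(A_{E_6})$) is precisely the content delegated to the cited reference, so your sketch is consistent with the paper's argument.
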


\begin{proof}[Proof of Theorem \ref{lkp}]
    The Dynking diagram $E_8$ contains $E_6$ as a subgraph and by Thereom \ref{artin} any geometric homomorphism $A_{E_8}\rightarrow\operatorname{Mod}(\Sigma_{4,1})$ has the Large Kernel Property. The versal deformation space $U_{E_8}$ comes with a monodromy $\pi_1(U_{E_8})\rightarrow\operatorname{Mod}(\Sigma_{4,1})$ that is a geometric homomorphism by the Picard-Lefschetz Theorem. However, the monodromy $\rho_\mathbb{P}$ can be obtained from the monodromy of $U_{E_8}$ by taking the quotient of the domain by the Garside element $\Delta_{E_8}$. The copy of the non-abelian free group of rank $2$ of Theorem \ref{artin} embedds in $\operatorname{Inn}(A_{E_8})$. Consequently, the claim is proved for $\rho_{\mathbb{P}}$ as $\ker\rho_{\mathbb{P}}$ is large.
\end{proof}

\end{document}